\documentclass[11pt]{article}

\oddsidemargin 8mm \textwidth 148mm \textheight 215mm \topskip 0pt
\topmargin -2mm

\usepackage{amsfonts, amssymb}
\usepackage{amsmath}
\usepackage{amsthm}
\usepackage{color}
\usepackage{indentfirst}
\usepackage{verbatim}   
\usepackage{mathrsfs}   

\usepackage{enumerate}

\usepackage{indentfirst}
\usepackage{color}

\renewcommand{\L}{\mathcal{L}}
\newcommand{\R}{\mathcal{R}}
\newcommand{\D}{\mathcal{D}}
\renewcommand{\H}{\mathcal{H}}
\newcommand{\J}{\mathcal{J}}
\newcommand{\I}{\mathcal{I}}

\newcommand{\p}{\prime}
\newcommand{\n}{\mathbf{n}}
\newcommand{\m}{\mathbf{m}}
\newcommand{\pt}{{\cal PT}_n}
\renewcommand{\t}{{\cal T}_n}
\newcommand{\w}{{W}}

\newcommand{\s}{{\cal S}_n}

\newcommand{\sr}{{SR}_n}

\newcommand{\oor}{{OR}_n}

\newcommand{\rk}{\mathrm{rk}}
\newcommand{\tp}{\mathrm{tp}}
\newcommand{\im}{\mathrm{im}}
\newcommand{\dom}{\mathrm{dom}}

\newcommand{\qaq}{\quad\text{and}\quad}

\begin{document}
\newtheorem{lemma}{Lemma}[section]
\newtheorem{proposition}[lemma]{Proposition}
\newtheorem{theorem}[lemma]{Theorem}
\newtheorem{corollary}[lemma]{Corollary}
\newtheorem{definition}[lemma]{Definition}
\newtheorem{example}[lemma]{Example}
\newtheorem{remark}[lemma]{Remark}
\numberwithin{equation}{section}

\title{Congruences on Orthogonal Rook Monoids and\\ Symplectic Rook Monoids}
\date{}

\author {Jianqiang Feng  ~~Zhenheng Li}

\vspace{ 0.2cm}

\maketitle

\begin{abstract}

We give a complete classification of all nonuniform congruences on orthogonal rook monoids and symplectic rook monoids. We find that there are four kinds of nonuniform congruences on the orthogonal rook monoids $\oor$ for even $n\ne 4$, and we describe each kind of the congruences explicitly in terms of normal subgroups of maximal subgroups. We also find that if $n = 4$, there are six kinds of nonuniform congruences on ${OR}_4$, and we describe these congruences using both $\mathcal{H}$-relations and certain normal subgroups of some maximal subgroups. In contrast, we find that there is only one kind of congruences on the symplectic rook monoids for all even $n\ge 2.$

\vspace{ 0.3cm} \noindent {\bf Keywords:} Congruence, orthogonal rook monoid, symplectic rook monoid, orthogonal Weyl group, symplectic Weyl group, admissible set.

\vspace{ 0.3cm} \noindent {\bf 2010 AMS Subject Classification:} 20M32, 20M10

\end{abstract}

\baselineskip 14pt
\parskip 1mm
\section{Introduction}
Orthogonal rook monoids and symplectic rook monoids are submonoids of rook monoids. Known respectively as orthogonal Renner monoids and symplectic Renner monoids in the theory of linear algebraic monoids \cite{P1, R2, S1}, they first appeared in 2001. Their structures are described explicitly in \cite{ZhenhengLi2001, ZhenhengLi2003, LR03} using admissible subsets and elementary matrices.


In this paper we are interested in the congruences on the orthogonal rook monoids and symplectic rook monoids. Before describing our main results, we provide some historical information about the congruences on some commonly encountered monoids.

The congruences on the monoid $\pt$ of all partial transformations were characterized in \cite{Sut}; the congruences on the monoid $\t$ of all full transformations were investigated in \cite{M1} whose results are then used to describe the congruences on the rook monoid $\mathcal{R}_n$ in \cite{Lib}. A detailed description of the congruences on $\pt, \t, \mathcal{R}_n$ can be found in \cite{GM}, and a systematic description of the congruences on their direct products are given in \cite{ABG}. There are elegant results on the congruences on inverse semigroups and regular semigroups; we refer the reader to \cite{Fe, PM, Pe, S, T} and the references therein for more details.

Let $\s$ be the symmetric group on $\n = \{1, \ldots, n\}$.
A submonoid $M$ of the monoid $\pt$ is called $\s$-normal if $s^{-1}\sigma s\in M$
for all $\sigma\in M$ and $s\in\s$. The monoid $\pt$ itself is $\s$-normal. The monoid $\t$ and the rook monoid $\mathcal R_n$ are also $\s$-normal.

The congruences on $\s$-normal monoids were described by Levi in \cite{Le}, from which it follows that if $n\ge 5$ then all nontrivial congruences on almost all $\s$-normal submonoids $M$ of $\pt$ have the form $\equiv_N$ where $N$ is a normal subgroup of ${S}_k, k = 1, \ldots, n$. For any $\sigma, \tau\in M$, the congruence $\sigma\equiv_N\tau$ is defined as follows.
\begin{enumerate}[{\rm\indent 1)}]
  \vspace{-2mm}
  \item \indent If $\rk(\sigma) < k$, then $\sigma\equiv_N\tau$ if and only if $\rk(\tau)< k$.
  \vspace{-2mm}
  \item \indent If $\rk(\sigma) > k$, then $\sigma\equiv_N\tau$ if and only if $\sigma = \tau$.
  \vspace{-2mm}
  \item \indent If $\rk(\sigma) = k$, then $\sigma\equiv_N\tau$ if and only if $\sigma$ and $\tau$ are $\mathcal{H}$-related and there exists $\mu\in N$ such that $\tau=\mu(\sigma)$ (see (\ref{hclass}) for more details).
\end{enumerate}

The orthogonal rook monoids $\oor$ and symplectic rook monoids $\sr$ are not $\s$-normal for $n=2m\geq 6$. Indeed, the unit group of $\sr$ is
\begin{align*}
    \w &=\{\tau\in S_n\mid\tau(\theta(i)) =\theta(\tau(i)) \mbox{ for all } i\in\n\}~,
\end{align*}
where $\theta$ is the involution of $\n$ defined by
$
    \theta(i) = n+1-i.
$
Take
$$\sigma=\left(\begin{array}{ccccccccc}
              1 & 2 & 3 & 4 & \dots & n-3 & n-2 & n-1 & n \\
              3 & 1 & 2 & 4 & \dots & n-3 & n-1 & n & n-2
            \end{array}\right)\in\oor\subset\sr
$$
$$s=\left(\begin{array}{ccccccccc}
              1 & 2 & 3   & 4 & \dots & n-3 & n-2 & n-1 & n \\
              2 & n & n-1 & 4 & \dots & n-3 & 3   & n-2 & 1
\end{array}\right)\in\s~.
$$
We find $s^{-1}\sigma s\notin\w$ since $s^{-1}\sigma s (\theta(1)) \ne \theta(s^{-1}\sigma s (1))$. Therefore, $s^{-1}\sigma s\notin\sr$.

We ask naturally. Are the congruences on the orthogonal rook monoids and symplectic rook monoid dramatically different from those on $\s$-normal monoids? If yes, how to determine them?

The above questions motivate us to investigate the congruences on the orthogonal rook monoids and the symplectic rook monoids. We find that though the orthogonal rook monoids $\oor$ and symplectic rook monoids $\sr$ are not $\s$-normal for $n=2m\geq 6$, the congruences on the orthogonal rook monoids are a lot different from those on $\s$-normal monoids, but the congruences on the symplectic rook monoids are consistent with those on $\s$-normal monoids.

In this paper we introduce four kinds of nonuniform congruences: $\equiv_N,$ $ \equiv_{N_1,N_2}, \equiv_N^{\rm I},$ and $\equiv_N^{\rm II}$ on the orthogonal rook monoids $\oor$ with $n$ even, and then show that they are the complete set of all nonuniform congruences on the orthogonal rook monoids for $n\ne 4$. For $n=4$, in addition to the above four kinks of congruences, we define two more kinds of congruences: $\equiv_1, \equiv_2$ on ${OR}_4$, and prove that these six kinds of congruences are the complete set of all nonuniform congruences on ${OR}_4$. It is important to highlight that the congruences $\equiv_1$ and $\equiv_2$ or the like don't exist on $\oor$ for $n\ne 4.$

The following is a summary of our main results about the congruence on $\oor$.
\begin{theorem}
Let $\rho$ be a nonuniform congruence on $\oor$. If $n\ne 4$, then
$\rho$ coincides with $\equiv_N$ for some $N\unlhd {S}_k, k\in\{1,\dots,m-1\}$,
or $\rho$ coincides with $\equiv_{N_1,N_2}$ for some $N_1,N_2\unlhd{S}_m$,
or $\rho$ coincides with $\equiv_N^{\rm I}$ or $\equiv_N^{\rm II}$ for some $N\unlhd
{S}_m$.
If $n = 4$, in addition to the above possible congruences, $\rho$ can also be $\equiv_1$ or $\equiv_2$.
\end{theorem}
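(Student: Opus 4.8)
The plan is to prove the classification by analyzing a nonuniform congruence $\rho$ through two complementary features: its interaction with the rank filtration of $\oor$ and its restriction to the group $\H$-classes, exploiting the known $\J$-class structure of the orthogonal rook monoid established in the earlier sections. Since $\oor$ is finite and regular with $\J$-classes graded by rank, I would first recall the shape of these classes: the non-group part is organized by isotropic data, the maximal isotropic rank is $m$, and below rank $m$ each group $\H$-class is governed by a single symmetric group $S_k$, whereas at the maximal isotropic rank $m$ there are two families of such $\H$-classes, each contributing a factor $S_m$, and these two families are related by the type-$D$ diagram symmetry. This structural picture is precisely what separates $\oor$ from a generic $\s$-normal monoid and forces the richer list of congruences.

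Next, for a fixed nonuniform $\rho$ I would isolate the critical level $k$ at which the congruence first becomes nontrivial. The first step is a downward-collapse phenomenon: if $\rho$ identifies two distinct elements of rank $r$, then multiplying by idempotents of smaller rank forces $\rho$ to identify all elements of each rank strictly below $r$, which yields clause (1) of the $\equiv_N$ template. The second step is upward rigidity: on ranks strictly above $k$, the fact that such elements are products of units and idempotents with no smaller collapse available forces $\rho$ to coincide with equality there, giving clause (2). Together these reduce the whole problem to classifying the restriction of $\rho$ to the principal factor at the single critical rank.

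The core of the argument is this restriction, and it is where the orthogonal geometry produces the four families. For $k\in\{1,\dots,m-1\}$ the relevant maximal subgroup is a single $S_k$, so the restriction is a normal subgroup $N\unlhd S_k$ and one recovers $\equiv_N$. At the maximal isotropic rank $k=m$ the two families enter: a congruence may act by independent normal subgroups $N_1,N_2\unlhd S_m$ on the two families, giving $\equiv_{N_1,N_2}$, or it may additionally identify elements of one family with elements of the other, and the two admissible ways of doing so give the crossed families $\equiv_N^{\rm I}$ and $\equiv_N^{\rm II}$. The main obstacle is to show that these four possibilities are exactly the ones extending to genuine congruences on all of $\oor$: I must verify compatibility with left and right multiplication by the non-unit generators and rule out any identification not arising from a normal subgroup of $S_m$. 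The rigidity that forbids such spurious identifications holds for $m\ge 3$ but is exactly what breaks down at $m=2$.

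The case $n=4$, i.e.\ $m=2$, is therefore exceptional: $S_2$ is abelian with only trivial normal subgroups, and the rank-$2$ structure of ${OR}_4$ is small enough that extra identifications---describable through $\H$-relations rather than through normal subgroups---satisfy the congruence axioms. I would treat this case by a direct finite analysis, exhibiting $\equiv_1$ and $\equiv_2$ explicitly and confirming that the generic argument excludes any analogue for $n\ne 4$, thereby completing the classification.
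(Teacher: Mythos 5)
Your overall architecture --- stratify by rank, establish downward collapse below the first nontrivial level, establish rigidity above it, and read off a normal subgroup of the group $\H$-class at the critical level --- is the same as the paper's (Lemmas \ref{congideal2}, \ref{noidealh}, \ref{inequality}, \ref{identity} together with Proposition \ref{hclassgroup}). But there is a genuine gap at the top of the rank filtration, and it is exactly where the $n=4$ exception enters. Your ``upward rigidity'' is justified by the collapse mechanism: identifying two distinct elements of rank $r$ forces $\I_{r-1}\subseteq\rho(0)$, or forces $\I_m^{\rm I}\subseteq\rho(0)$ or $\I_m^{\rm II}\subseteq\rho(0)$ when $r=n$. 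This does rule out identifications above the critical rank when $\rho(0)=\I_{k-1}$ with $k\le m$, but it does \emph{not} rule out identifications inside the unit group $\w'$ when $\rho(0)$ is already $\I_m^{\rm I}$ or $\I_m^{\rm II}$, because in that case the conclusion of the collapse lemma is satisfied vacuously. The paper needs a separate argument here (Lemma \ref{congonw'} i)): for $m\ge 3$, if $\sigma,\tau\in\w'$ are $\rho$-equivalent, one multiplies by idempotents $\varepsilon_I,\varepsilon_J,\varepsilon_K$ supported on several overlapping admissible $m$-subsets of the type \emph{not} contained in $\rho(0)$, deduces $\sigma(I)=\tau(I)$, $\sigma(J)=\tau(J)$, $\sigma(K)=\tau(K)$, and intersects to conclude $\sigma(s)=\tau(s)$ for every $s$. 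For $m=2$ there are too few admissible $2$-subsets of a fixed type for this to work, and that is precisely what allows $\equiv_1$ and $\equiv_2$ --- which collapse $\{\epsilon,\delta_1\}$, respectively $\{\epsilon,\delta_2\}$, inside the Klein four-group $\w'$ --- to exist. Your explanation of the $n=4$ exception via ``$S_2$ is abelian'' points at the wrong place: the new congruences live in the unit group (the reducible type-$D_2$ Weyl group), not in the rank-$2$ maximal subgroups.

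A smaller but real misstatement: $\equiv_N^{\rm I}$ and $\equiv_N^{\rm II}$ do not ``identify elements of one family with elements of the other.'' Under $\equiv_N^{\rm I}$ the entire type-II family at rank $m$ is absorbed into the zero class $\rho(0)=\I_m^{\rm II}$ together with everything of rank less than $m$, while the type-I family is partitioned by $N$; no type-I element of rank $m$ is ever equivalent to a type-II element of rank $m$ under any of the four congruence families. The trichotomy at rank $m$ is governed by which ideal $\rho(0)$ is ($\I_{m-1}$, $\I_m^{\rm I}$, or $\I_m^{\rm II}$, per Corollary \ref{ideal}), not by an optional cross-identification between the two families. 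With the rank-$n$ rigidity lemma supplied and this description corrected, the rest of your plan goes through as in the paper.
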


Contrary to the case of the orthogonal rook monoids, we find that there is only one kind of congruences $\equiv_N$ on the symplectic rook monoids, given in Theorem \ref{symConResult}.

To obtain the main results we first gather in Section \ref{pre} some basic notations and facts about orthogonal rook monoids and symplectic rook monoids, and then investigate in Section \ref{green relations} the ideals and Green relations on $\oor$. We then describe in Section \ref{conOrthogonal} the congruences on the orthogonal rook monoids. We at last describe in Section \ref{conSymplectic} the congruences on the symplectic rook monoids.

From now on, we always assume that $n$ is a positive integer of the form $n=2m\ge 2$.

{\bf Acknowledgement } The authors thank Reginald Koo for useful comments on the manuscript.

\section{Preliminaries}\label{pre}

The rook monoid $\mathcal{R}_n$ consists of all injective partial mappings $\sigma$ from a subset of $\n$ to a subset of $\n$. We use $I(\sigma)$ to denote the domain of $\sigma$, and $J(\sigma)$ the image of $\sigma$. The cardinality $|I(\sigma)|$ is called the {\it rank} of $\sigma$, and is denoted by $\mathrm{rk}(\sigma)$.

To define the orthogonal rook monoid and symplectic rook monoids, we need the involution $\theta$ of $\n$ defined by
$$
    \theta(i) =n+1-i~.
$$
A proper subset $I$ of $\n$ is {\em admissible} if the intersection of $I$ and $\theta(I) $ is empty; the set $\n$ and the empty set $\emptyset$ are considered admissible in order to make mathematical expressions simpler. We often write $\bar i = \theta(i)$ for simplicity.

An admissible subset $I$ is referred to as an admissible $k$-subset if $|I| = k$. Clearly, there is no admissible $k$-subset for $k=m+1, \ldots, n-1.$ 

In what follows when mentioning admissible subsets, we always mean admissible subsets of $\n$.

An injective partial mapping $\sigma$ of $\n$ is {\em symplectic}  if both its domain and image are proper admissible subsets of $\n$, or if $\sigma$ is of rank $n$ and it transforms any admissible subset of $\n$ to an admissible subset.
The {\em symplectic rook monoid}, denoted by $\sr$, is the monoid consisting of all symplectic injective partial mappings from $\mathbf{n}$ to $\mathbf{n}$, with multiplication the composition of
partial mappings, identity element the identity mapping, and zero element the empty mapping.

The unit group of $\sr$ is
\vspace{-2mm}
\begin{align*}
    \w & = \{\sigma\in\mathcal{R}_n\mid \rk(\sigma) = n \text{ and } \sigma\text{ maps admissible sets to admissible sets}\} \\
        &=\{\sigma\in S_n\mid\sigma(\bar{i}) =\overline{\sigma(i)} \mbox{ for all } i\in\n\}~ .
\vspace{-3mm}
\end{align*}
The group $\w$ is isomorphic to the symplectic Weyl group.
Define
\begin{equation}\label{wprime}
  \w' = \{\sigma\in\w ~\big |\,~ |\sigma(\m) \cap (\n\setminus\m)| \text{ is even }\}~,
\end{equation}
where $\m=\{1, 2, \ldots, m\}$. Then $\w'$ is a normal subgroup of $\w$, and $\w'$ is isomorphic to the even special orthogonal Weyl group.

\begin{definition}
    An admissible $m$-subset $A$ is of type {\rm I} if there exists $w\in\w'$ such that $wA = \{1, \ldots, m\}$; type {\rm II} if $wA = \{1, \ldots, m-1, m+1\}$.
\end{definition}
\vspace{-3mm}

It is easily seen that an admissible $m$-subset $A$ is of type {\rm I} if and only if it contains an even number of elements greater than $m$; type {\rm II} if and only $A$ contains an odd number of elements greater than $m$.
For instance, if $n=2m=8$ then the sets $\{1, 2, 3\}$ and $\{2, 5, 6\}$ are admissible $3$-subsets of type {\rm I}, but $\{1, 2, 5\}$ and $\{5, 6, 7\}$ are admissible $3$-subsets of type {\rm II}. It is obvious that an admissible $m$-subset is of either type {\rm I} or else type {\rm II}.

An injective partial mapping $\sigma$ of $\n$ is {\em orthogonal} if either
\begin{enumerate}[{\rm\indent 1)}]
\vspace{-3mm}
  \item $\rk(\sigma)<m$ and both $I(\sigma)$ and $J(\sigma)$ are admissible, or
\vspace{-3mm}
  \item $\rk(\sigma)=m$ and both $I(\sigma)$ and $J(\sigma)$ are admissible and of the same type, or
\vspace{-3mm}
  \item $\sigma\in\w'$.
  \vspace{-3mm}
\end{enumerate}
The {\em orthogonal rook monoid} $\oor$ consists of all orthogonal injective partial mappings from $\mathbf{n}$ to $\mathbf{n}$. Clearly, $\oor$ is a submonoid of $\sr$. The group $\w'$ is the unit group of ${\cal OR}_n$.
If $\alpha\in\oor$ and $\rk(\alpha) = m$, we define the type $\tp(\alpha)$ of $\alpha$ to be the type of its domain, which is equal to the type of its image.
We list two easy lemmas.
\begin{lemma}\label{rank}
For all $\sigma,\tau\in \sr$, the rank $\rk(\tau\sigma) \leq
\min(\rk(\sigma) ,\rk(\tau) ) $.
\end{lemma}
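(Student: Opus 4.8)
The plan is to argue directly from the definitions of composition of partial injective maps and of rank, since both $\sigma$ and $\tau$ lie in $\mathcal{R}_n$ and the rank of any element of $\mathcal{R}_n$ is simply the cardinality of its domain (equivalently of its image, by injectivity). First I would fix the composition convention used throughout the paper, reading $\tau\sigma$ as the partial map obtained by applying $\sigma$ first and then $\tau$, so that $(\tau\sigma)(x)=\tau(\sigma(x))$ whenever both applications are defined.

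The first key step is to locate the domain of the composite. An element $x$ lies in $I(\tau\sigma)$ precisely when $x\in I(\sigma)$ and $\sigma(x)\in I(\tau)$; in particular $I(\tau\sigma)\subseteq I(\sigma)$. Taking cardinalities gives $\rk(\tau\sigma)=|I(\tau\sigma)|\le |I(\sigma)|=\rk(\sigma)$.

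The second key step is the dual statement for images. Every value of $\tau\sigma$ has the form $\tau(\sigma(x))$, hence lies in $J(\tau)$, so that $J(\tau\sigma)\subseteq J(\tau)$. Since $\tau\sigma$ is again an injective partial map, its rank equals $|J(\tau\sigma)|$, and therefore $\rk(\tau\sigma)=|J(\tau\sigma)|\le |J(\tau)|=\rk(\tau)$. Combining the two bounds yields $\rk(\tau\sigma)\le\min(\rk(\sigma),\rk(\tau))$.

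There is no serious obstacle here: the statement is an immediate structural consequence of the fact that composing partial injective maps can only restrict the domain and shrink the image. The only points requiring a little care are to be consistent about the composition convention and to invoke injectivity, so that rank may legitimately be measured on either the domain side or the image side; once that is settled, the two containments above complete the proof.
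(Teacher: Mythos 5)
Your proof is correct and is exactly the elementary argument the paper has in mind: the authors state this as one of "two easy lemmas" without proof, and your two containments $I(\tau\sigma)\subseteq I(\sigma)$ and $J(\tau\sigma)\subseteq J(\tau)$, together with injectivity to equate $|I|$ and $|J|$ with the rank, are the standard justification. Nothing further is needed.
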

\begin{lemma}\label{idempotent}
The element $\alpha\in\sr$ is an idempotent if and only if $\alpha$
is the identity transformation of some admissible subset
$A\subseteq\n$.
\end{lemma}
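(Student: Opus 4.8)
The plan is to prove the two implications separately, treating this as the standard characterization of idempotents in a symmetric-inverse-type monoid, with the one extra wrinkle that membership in $\sr$ is what forces the fixed-point set to be admissible. Throughout I would write $I = I(\alpha)$ for the domain and $J = J(\alpha)$ for the image of $\alpha$.

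For the easy direction I would take $\alpha$ to be the identity transformation $\mathrm{id}_A$ of an admissible subset $A\subseteq\n$. Composition of $\mathrm{id}_A$ with itself fixes every point of $A$ and leaves the domain unchanged, so $\alpha^2=\alpha$ is immediate. I would also pause to note that such an $\alpha$ genuinely lies in $\sr$, so that the right-hand side of the equivalence is not vacuous: if $A$ is a proper admissible subset, then both the domain and image of $\mathrm{id}_A$ equal $A$ and are proper admissible, so $\mathrm{id}_A$ is symplectic; if $A=\n$, then $\mathrm{id}_A$ has rank $n$ and sends every admissible subset to itself, hence is again symplectic.

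For the forward direction I would assume $\alpha\in\sr$ with $\alpha^2=\alpha$ and argue in two stages. First I compare domains: the domain of $\alpha^2$ is $\{x\in I : \alpha(x)\in I\}$, and the equality $\alpha^2=\alpha$ forces this to be all of $I$, so $\alpha(I)\subseteq I$; since $\alpha$ is injective and $I$ is finite, this gives $J=\alpha(I)=I$. Then, for each $x\in I$, the relation $\alpha(\alpha(x))=\alpha(x)$ holds, and applying injectivity of $\alpha$ to both sides yields $\alpha(x)=x$. Hence $\alpha$ is the identity transformation of $I$. It remains to check that $I$ is admissible, and this is the only point at which the ambient monoid $\sr$ is used: by the definition of a symplectic mapping, either $\rk(\alpha)<n$, in which case $I$ is a proper admissible subset by definition, or $\rk(\alpha)=n$, in which case $I=\n$, which is admissible by convention. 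Either way $I$ is admissible, which completes the argument with $A=I$.

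I do not expect a serious obstacle here, as the result is essentially the well-known description of idempotents in the rook monoid specialized to $\sr$. The only step carrying genuine content is the injectivity argument that converts $\alpha(\alpha(x))=\alpha(x)$ into $\alpha(x)=x$; the rest is bookkeeping about domains together with the definition of a symplectic mapping to secure admissibility.
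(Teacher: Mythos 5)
Your proof is correct. The paper states this as one of "two easy lemmas" and supplies no proof at all, so there is nothing to compare against; your argument (idempotency forces $J(\alpha)=I(\alpha)$ and, via injectivity, $\alpha(x)=x$ on the domain, with admissibility of the domain coming straight from the definition of a symplectic partial mapping) is exactly the standard one the authors evidently had in mind.
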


For an admissible subset $A\subseteq\n$ we denote by $\varepsilon_A$ the
unique idempotent of $\oor$ such that $\dom(\varepsilon_A)=A$.

\section{Ideals and Green's relations in $\oor$}\label{green relations}

\begin{proposition}\label{principal}
Let $S=\oor$ and $\sigma\in S$.

{\rm i)} $\sigma S=\{\tau\in S\mid  J(\tau)\subseteq   J(\sigma)\}.$

{\rm ii)} $S\sigma=\{\tau\in S\mid  I(\tau)\subseteq   I(\sigma)\}.$

{\rm iii)} If $\rk(\sigma)\ne m$, then $S\sigma S =\{\tau\in S\mid\rk(\tau)\leq \rk(\sigma)\}$. If $\rk(\sigma) = m$, then
\[
    S\sigma S =\{\tau\in S \mid \rk(\tau)<\rk(\sigma),\text{ or } \tp(\sigma)=\tp(\tau) \text{ if }\rk(\tau)=m\} .
\]

\end{proposition}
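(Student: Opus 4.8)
The plan is to establish each containment in i)--iii) by combining the rank inequality of Lemma~\ref{rank} for the ``$\subseteq$'' directions with explicit factorizations for the ``$\supseteq$'' directions. Throughout I read a product $\sigma\beta$ as the composite partial map $x\mapsto\sigma(\beta(x))$, so that $I(\sigma\beta)\subseteq I(\beta)$ and $J(\sigma\beta)\subseteq J(\sigma)$; this is the convention under which the stated image/domain descriptions in i) and ii) come out correctly.

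For part~i), the inclusion $\sigma S\subseteq\{\tau\mid J(\tau)\subseteq J(\sigma)\}$ is immediate, since $J(\sigma\beta)\subseteq J(\sigma)$ for every $\beta\in S$. For the reverse, given $\tau\in S$ with $J(\tau)\subseteq J(\sigma)$, I would set $\beta=\sigma^{-1}\tau$, where $\sigma^{-1}$ is the partial inverse bijection of $\sigma$ with domain $J(\sigma)$; this is well defined because $J(\tau)\subseteq J(\sigma)=\dom(\sigma^{-1})$, and it satisfies $\sigma\beta=\tau$. The real content is checking $\beta\in\oor$: one has $I(\beta)=I(\tau)$ and $J(\beta)=\sigma^{-1}(J(\tau))\subseteq I(\sigma)$, so both are admissible (a subset of an admissible set is admissible), and $\rk(\beta)=\rk(\tau)$. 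When $\rk(\tau)=m$ one must also verify that $I(\beta)$ and $J(\beta)$ have the same type: this follows because $\tau$ and $\sigma$ are orthogonal of rank $m$ (domain and image then share a type), together with either $J(\tau)=J(\sigma)$ (if $\rk(\sigma)=m$) or the fact that $\sigma^{-1}\in\w'$ preserves type (if $\sigma$ is a unit). Part~ii) is proved dually, taking $\beta=\tau\sigma^{-1}$.

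For part~iii), the ``$\subseteq$'' direction uses Lemma~\ref{rank} to give $\rk(\alpha\sigma\beta)\le\rk(\sigma)$ for all $\alpha,\beta\in S$. When $\rk(\sigma)=m$ I would additionally track types: if $\rk(\alpha\sigma\beta)=m$ then the rank cannot drop along the composite, so $J(\sigma\beta)=J(\sigma)$ and $I(\alpha\sigma\beta)=I(\sigma\beta)$ (each an inclusion of equal-size sets), whence $\tp(\alpha\sigma\beta)=\tp(\sigma)$ by the same-type condition at rank $m$; this rules out rank-$m$ elements of the opposite type. For the ``$\supseteq$'' direction, given $\tau\in S$ with $\rk(\tau)=r\le\rk(\sigma)$, I would realize $\tau=\alpha\sigma\beta$ explicitly: choose an $r$-subset $A\subseteq I(\sigma)$ and a bijection $\beta\colon I(\tau)\to A$, put $B=\sigma(A)$, and define $\alpha\colon B\to J(\tau)$ by $\alpha(\sigma(\beta(x)))=\tau(x)$. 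When $r<m$ the sets involved are automatically admissible of rank $<m$, so $\alpha,\beta\in\oor$ with no further condition; when $\rk(\sigma)=m$ and $\rk(\tau)=m$ with $\tp(\tau)=\tp(\sigma)$, I would instead take $A=I(\sigma)$ and choose $\beta\colon I(\tau)\to I(\sigma)$ a bijection, so that $\beta$ and $\alpha$ both have rank $m$ with domain and image of the common type $\tp(\sigma)$, hence lie in $\oor$.

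The main obstacle is not the combinatorics of these factorizations, which are routine, but verifying that every constructed map genuinely lies in $\oor$: checking admissibility of the relevant domains and images and, at rank $m$, the matching of types. The type bookkeeping is exactly where the special structure of $\oor$ (as opposed to $\sr$ or a general rook monoid) enters, and it rests on two facts --- that passing to a subset preserves admissibility, and that elements of the unit group $\w'$ preserve the type of an admissible $m$-set. The case $\rk(\sigma)=n$ (so $\sigma\in\w'$ is a unit) can be disposed of at once, since then $S\sigma S=S=\{\tau\mid\rk(\tau)\le n\}$.
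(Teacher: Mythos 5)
Your proposal is correct and follows essentially the same route as the paper's proof: the easy inclusions come from the rank/image containments, and the reverse inclusions are obtained by the same explicit factorizations $\tau=\sigma\gamma$ and $\tau=\delta\sigma\gamma$ with the identical case analysis on ranks and on types at rank $m$ (your disposal of $\rk(\sigma)=n$ in iii) via $S\sigma S=S$ is a minor streamlining, and the one spot where "a subset of an admissible set is admissible" does not literally apply --- namely $I(\sigma)=\n$ --- is already covered by your appeal to the fact that elements of $\w'$ preserve admissible sets and their types).
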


\begin{proof}

{\rm i}) It is obvious that if $\gamma\in\oor$, then $ J(\sigma\gamma)\subseteq
 J(\sigma)$. That is,
\[
    \sigma S\subseteq\{\tau\in S\mid  J(\tau)\subseteq   J(\sigma)\}.
\]
Conversely, suppose $\tau\in S$ with $ J(\tau)\subseteq   J(\sigma)$. Let
$ J(\tau)=\{c_1,\dots,c_s\}$ and $ J(\sigma)=\{c_1,\dots,c_s,\dots,c_t\}$ where $s, t\in\{1, \ldots, m, n\}$ with $s\le t={\rm rk}(\sigma)$.
Set
$$\tau=\left(
\begin{array}{ccc}
  b_1 & \dots & b_s \\
  c_1 & \dots & c_s
\end{array}\right) \quad\text{and}\quad
\sigma=\left(
\begin{array}{ccccc}
  a_1 & \dots & a_s & \dots & a_t\\
  c_1 & \dots & c_s & \dots & c_t
\end{array}\right).
$$
Define $\gamma= \left(
\begin{array}{ccc}
  b_1 & \dots & b_s \\
  a_1 & \dots & a_s
\end{array}\right)$.
If $s< m$, the set $\{b_1,\dots,b_s\}$ is admissible since it is the domain of $\tau$; the set $\{a_1,\dots,a_s\}$ is also admissible since it is the pre-image of $ J(\tau)$ under $\sigma$. Thus $\gamma\in S$, and hence $\tau=\sigma\gamma\in\sigma S.$
If $s=n$ then $\sigma,\tau\in\w^\p$ and $\gamma=\sigma^{-1}\tau\in\w^\p$, so
$\tau=\sigma\gamma\in\sigma S.$

If $s=m$, by definition $\{b_1,\dots,b_s\}$ and $\{c_1,\dots,c_s\}$ are of the same type since they are the domain and image of $\tau$, respectively. Similarly, if $t=m$, then $\{a_1,\dots,a_t\}$ and $\{c_1,\dots,c_t\}$ are of the same type.  If $t=n$, then
$\{a_1,\dots,a_s\}$ and $\{c_1,\dots,c_s\}$ are also of the same type
since every element of $\w^\p$ keeps the type of each admissible
$m$-subset of $\n$. It follows that $\tau=\sigma\gamma\in\sigma S.$

ii) The proof is parallel to that of i).

iii) We now prove the first part of iii). If $\rk(\sigma)\ne m$, let
\[
    X=\{\tau\in S\mid\rk(\tau)\leq \rk(\sigma)\}.
\]
We obtain from Lemma \ref{rank} that
$S\sigma S\subseteq X$. Conversely, for each $\tau\in X$, let $$\tau=\left(
\begin{array}{ccc}
  c_1 & \dots & c_s \\
  d_1 & \dots & d_s
\end{array}\right)\quad\text{and}\quad\sigma=\left(
\begin{array}{ccc}
  a_1 & \dots & a_t \\
  b_1 & \dots & b_t
\end{array}\right),\quad\text{where } s=\rk(\tau)\leq t={\rm rk}(\sigma).
$$
Take any $s$-subset $\{a_{i_1},\dots,a_{i_s}\}$ from the admissible set $\{a_1,\dots,a_t\}$. If $t\leq m-1$, then $\{a_{i_1},\dots,a_{i_s}\}$ and $\{b_{i_1},\dots,b_{i_s}\}$ are both $s$-admissible subsets. Define
$$\delta= \left(
\begin{array}{ccc}
  b_{i_1} & \dots & b_{i_s} \\
  d_1 & \dots & d_s
\end{array}\right)\quad\text{and}\quad
\gamma= \left(
\begin{array}{ccc}
   c_1 & \dots & c_s \\
   a_{i_1} & \dots & a_{i_s}
\end{array}\right).$$
Then $\delta,\gamma\in S$ and
$\tau=\delta\sigma\gamma\in S\sigma S$.
If $t=n$, then $\sigma\in{\cal{W}}'$. If $s=m$, we require the type of the set  $\{a_{i_1},\dots,a_{i_s}\}$ chosen above to have the same type of $\tau$; this is achievable since $\n$ has admissible subsets of both types. It follows that $\delta$ and $\gamma$ are both in $ S$, and $\tau=\delta\sigma\gamma\in S\sigma S$.

We next show the second part of iii).  If $\rk(\sigma) = m$, let
\[
    X=\{\tau\in S \mid \rk(\tau)<m, \text{ or } \rk(\tau)=m\text{ with }\tp(\sigma)=\tp(\tau)\}.
\]
Let $\delta, \gamma\in S$. Clearly if $\rk(\delta\sigma\gamma)<m$, then $\delta\sigma\gamma\in X.$
If $\rk(\delta\sigma\gamma)=m$, from Lemma \ref{rank} it follows that $\rk(\delta), \rk(\gamma) \ge m$. Hence
$\sigma$ and $\delta\sigma\gamma$ are of the same type because the elements of $S$ of ranks $m$ and $n$ preserve the type of an admissible $m$-subset. We obtain $S\sigma S\subseteq X$.

Conversely, for each $\tau\in X$, if $\rk(\tau)<m$, a similar argument to part i) yields $\tau\in S\sigma S$. If $\rk(\tau)=m$ and $\tp(\tau)=\tp(\sigma)$, a similar argument to part i) again shows that there exist $\delta, \gamma\in\sr$ such that $\tau=\delta\sigma\gamma$. It is obvious that both $\delta$ and $\tau$ are of the same type as that of $\sigma$. We then find that $\delta$ and $\gamma$ lie in $S$ and thus $\tau\in S\sigma S$.
\end{proof}

For $k=0,1,\dots,m-1,n$ we define
\[
    \mathcal{I}_k=\{\sigma\in\oor\mid\rk(\sigma)\leq k\}~.
\]
We also define
\begin{align*}
  \mathcal{I}_m^{\rm I}    &= \{\sigma\in\oor\mid\rk(\sigma)<m; \text{ or }\rk(\sigma)=m,\tp(\sigma)={\rm I}\}~, \\
  \mathcal{I}_m^{\rm II} &= \{\sigma\in\oor\mid\rk(\sigma)<m; \text{ or }\rk(\sigma)=m,\tp(\sigma)={\rm II}\}~.
\end{align*}
In particular, $\mathcal{I}_n = \oor$ and $\mathcal{I}_m^{\rm I}\cup\mathcal{I}_m^{\rm II} = \oor\setminus\w'$.

The following result is a consequence of Proposition \ref{principal} iii).

\begin{corollary}\label{ideal}
    The sets $\mathcal{I}_m^{\rm I}, \mathcal{I}_m^{\rm II}$, and $\mathcal{I}_k$ are the only ideals of the orthogonal rook monoid $\oor$ where $k=0,1,\dots,m-1,n$.
\end{corollary}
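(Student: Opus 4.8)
The plan is to reduce the corollary to the computation of principal ideals carried out in Proposition~\ref{principal}~iii), via the standard fact that in a monoid every two-sided ideal is a union of principal ideals. Indeed, if $I$ is an ideal of $S=\oor$ and $\sigma\in I$, then $\sigma=1\cdot\sigma\cdot 1\in S\sigma S\subseteq I$, so $I=\bigcup_{\sigma\in I}S\sigma S$. Hence it suffices to (i) list all principal ideals $S\sigma S$ and (ii) decide which of their unions occur.

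For (i) I would read off Proposition~\ref{principal}~iii) directly, using that the only ranks occurring in $\oor$ are $0,1,\dots,m,n$ because there is no admissible $k$-subset with $m<k<n$. The proposition shows $S\sigma S$ depends only on $\rk(\sigma)$, together with $\tp(\sigma)$ when $\rk(\sigma)=m$: one gets $S\sigma S=\mathcal{I}_{\rk(\sigma)}$ for $\rk(\sigma)\in\{0,\dots,m-1\}$, then $S\sigma S=\mathcal{I}_m^{\rm I}$ or $\mathcal{I}_m^{\rm II}$ according to $\tp(\sigma)$ when $\rk(\sigma)=m$, and $S\sigma S=\mathcal{I}_n=\oor$ when $\rk(\sigma)=n$. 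Each listed set is realized this way, e.g. by the idempotent $\varepsilon_A$ with $A$ an admissible subset of the required size and type (such $A$ exist for every size $\le m$ and for both types at size $m$) and by the identity at rank $n$; being principal ideals, they are in particular ideals, which gives one half of the statement.

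For (ii), combining the two observations, every ideal of $\oor$ is a union of the finitely many sets $\mathcal{I}_0,\dots,\mathcal{I}_{m-1},\mathcal{I}_m^{\rm I},\mathcal{I}_m^{\rm II},\mathcal{I}_n$. I would then record their containment order: they form the chain $\mathcal{I}_0\subsetneq\cdots\subsetneq\mathcal{I}_{m-1}\subsetneq\mathcal{I}_m^{\rm I},\mathcal{I}_m^{\rm II}\subsetneq\mathcal{I}_n$ in which $\mathcal{I}_m^{\rm I}$ and $\mathcal{I}_m^{\rm II}$ are incomparable. A union of sets in such a poset is determined by its maximal members, and the only antichains are the singletons and the pair $\{\mathcal{I}_m^{\rm I},\mathcal{I}_m^{\rm II}\}$; hence the ideals are exactly the listed sets together with their join $\mathcal{I}_m^{\rm I}\cup\mathcal{I}_m^{\rm II}=\oor\setminus\w'$ recorded earlier. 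In particular the listed sets are precisely the join-irreducible ideals and generate the whole ideal lattice.

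I expect the only delicate point to be the rank-$m$ layer, namely verifying that $\mathcal{I}_m^{\rm I}$ and $\mathcal{I}_m^{\rm II}$ are distinct and incomparable and that no ideal can contain a rank-$m$ element of one type without containing all of the corresponding $\mathcal{I}_m^{\rm I}$ or $\mathcal{I}_m^{\rm II}$, while possibly omitting the other type. This is exactly the content of the second formula in Proposition~\ref{principal}~iii), which forces any product $\delta\sigma\gamma$ of rank $m$ to inherit $\tp(\sigma)$, so the two types cannot be merged from below. Everything else is the routine bookkeeping of down-sets in a finite chain, with Lemma~\ref{rank} entering only through the proposition.
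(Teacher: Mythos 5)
Your strategy is the one the paper itself intends: Corollary~\ref{ideal} is presented as an immediate consequence of Proposition~\ref{principal}~iii), and your reduction (every ideal of a monoid is the union of the principal ideals of its elements, then read off $S\sigma S$ by rank and, at rank $m$, by type, and realize each listed set as $S\varepsilon_A S$ for a suitable admissible $A$) is exactly the argument spelled out in full. That part is correct and complete.

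The problem is that your argument, carried out honestly, does not prove the corollary as stated --- it proves something that contradicts it. As you yourself note, the union $\I_m^{\rm I}\cup\I_m^{\rm II}=\oor\setminus\w'$ is an ideal: it is a union of two ideals, namely the union over the antichain $\{\I_m^{\rm I},\I_m^{\rm II}\}$ in your containment poset. This set is none of $\I_m^{\rm I}$, $\I_m^{\rm II}$, or $\I_k$ for $k=0,1,\dots,m-1,n$, since it contains rank-$m$ elements of both types yet omits $\w'$. So the list in the corollary is not exhaustive: there are $m+4$ ideals, not $m+3$, and what the listed sets actually are is the set of \emph{principal} (equivalently, join-irreducible) ideals --- which is all the paper ever uses, e.g.\ in the step $\I_{j+1}=\oor\varepsilon_A\sigma\oor$ of Lemma~\ref{congideal2}. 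As a referee would put it: either amend the statement to include $\I_m^{\rm I}\cup\I_m^{\rm II}$ among the ideals, or weaken it to a classification of principal ideals; but you cannot simultaneously assert your (correct) conclusion in part (ii) and claim to have proved the corollary verbatim. Flag the discrepancy explicitly rather than leaving the two conclusions side by side.
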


From Corollary \ref{ideal} we find $\oor$ has two chains of ideals with respect to inclusion:
\[
    \I_0\subseteq\I_1\subseteq\dots\subseteq\I_{m-1}\subseteq\I_m^*\subseteq\I_n~,
\]
where $\I_m^*$ is either $\I_m^{\rm I}$ or $\I_m^{\rm II}$.

We use $\mathcal{L, R, H, J, D}$ to denote the Green relations on $\oor$, and for each $\sigma\in\oor$ we use $\mathcal{L(\sigma), R(\sigma), H(\sigma), J(\sigma), D(\sigma)}$ to denote the corresponding equivalence classes, respectively.
\begin{proposition}\label{green} Let $\sigma\in\oor$. Then
\begin{enumerate}[{\rm i)}]\vspace{-2mm}
    \item $\mathcal{L}(\sigma) = \{\tau\in\oor \mid I(\tau) = I(\sigma)\}$. \vspace{-2mm}

    \item  $\mathcal{R}(\sigma) = \{\tau\in\oor \mid J(\tau) = J(\sigma)\}$. \vspace{-2mm}

    \item  $\mathcal{H}(\sigma) = \{\tau\in\oor \mid I(\tau) = I(\sigma) \text{ and } J(\tau) = J(\sigma)\}$. \vspace{-2mm}

    \item  $\mathcal{J}(\sigma) = \{\tau\in\oor \mid \rk(\tau) = \rk(\sigma), \text{ and } \tp(\tau) = \tp(\sigma) \text{ if } \rk(\sigma) =m\}$. \vspace{-2mm}

    \item  $\mathcal{D}(\sigma) = \mathcal{J}(\sigma)$. \vspace{-2mm}
\end{enumerate}
\end{proposition}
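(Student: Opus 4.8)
The plan is to compute the five Green relations by leveraging the description of principal one-sided and two-sided ideals already established in Proposition \ref{principal}. Recall that $\sigma\,\L\,\tau$ iff $S\sigma = S\tau$, that $\sigma\,\R\,\tau$ iff $\sigma S = \tau S$, that $\H = \L\cap\R$, and that $\J$ is given by $S\sigma S = S\tau S$. Since $\oor$ is a finite monoid, $\D = \J$ will follow once we show every $\J$-class is also a $\D$-class.

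First I would handle parts i), ii), iii). By Proposition \ref{principal} ii), $S\sigma = \{\tau\in S\mid I(\tau)\subseteq I(\sigma)\}$, so $S\sigma = S\tau$ holds precisely when $I(\tau)\subseteq I(\sigma)$ and $I(\sigma)\subseteq I(\tau)$, i.e. $I(\sigma)=I(\tau)$; this gives i). Part ii) is the mirror image using Proposition \ref{principal} i) and the images $J(\cdot)$. Part iii) is then immediate from $\H=\L\cap\R$: the conditions $I(\tau)=I(\sigma)$ and $J(\tau)=J(\sigma)$ must hold simultaneously.

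Next I would treat part iv). From Proposition \ref{principal} iii), when $\rk(\sigma)\ne m$ the principal ideal $S\sigma S$ is determined by rank alone, so $S\sigma S = S\tau S$ forces $\rk(\tau)=\rk(\sigma)$, and conversely equal ranks give equal ideals; this yields $\J(\sigma)=\{\tau\mid\rk(\tau)=\rk(\sigma)\}$ in this case. When $\rk(\sigma)=m$, Proposition \ref{principal} iii) shows $S\sigma S$ depends on both the rank and the type, so $S\sigma S=S\tau S$ holds iff $\rk(\tau)=\rk(\sigma)=m$ and $\tp(\tau)=\tp(\sigma)$. Combining the two cases gives the stated formula for $\J(\sigma)$.

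Finally, for part v) I would prove $\D=\J$. The inclusion $\D\subseteq\J$ holds in any semigroup, so it suffices to show $\J\subseteq\D$, i.e. that any two $\J$-related elements are $\D$-related. Fixing $\sigma,\tau$ with $\rk(\sigma)=\rk(\tau)=k$ (and the same type when $k=m$), I would exhibit an element $\rho\in\oor$ with $\sigma\,\L\,\rho$ and $\rho\,\R\,\tau$ by choosing $\rho$ to have domain $I(\sigma)$ and image $J(\tau)$; the membership $\rho\in\oor$ is guaranteed because $I(\sigma)$ and $J(\tau)$ are admissible of the same cardinality $k$ (and, when $k=m$, of the common type of the $\J$-class), and parts i) and ii) then certify the two one-sided relations. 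The main obstacle is precisely this construction at rank $m$: one must check that a partial map with prescribed admissible domain and image of the correct matching types actually lies in $\oor$ rather than merely in $\sr$, which is where the type-compatibility built into the definition of the $\J$-class does the essential work.
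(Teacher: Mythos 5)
Your argument for parts i)--iv) is exactly the paper's: read off $\L$, $\R$, $\H$, and $\J$ from the descriptions of the principal left, right, and two-sided ideals in Proposition \ref{principal}, using that $\oor$ is a monoid so that an element lies in its own principal ideals. The only place you diverge is part v): the paper simply invokes the standard fact that $\D=\J$ in any finite semigroup, whereas you prove the nontrivial inclusion $\J\subseteq\D$ directly by exhibiting, for $\J$-related $\sigma$ and $\tau$, an intermediate element with domain $I(\sigma)$ and image $J(\tau)$, and checking it lies in $\oor$ (the type-matching at rank $m$ being the one point that needs care, and your observation that $\tp(\sigma)=\tp(\tau)$ forces $I(\sigma)$ and $J(\tau)$ to have the same type settles it). Your route is more self-contained and makes visible exactly where the type condition enters; the paper's is shorter but leans on general semigroup theory. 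Both are correct. One cosmetic remark: you named the intermediate element $\rho$, which collides with the paper's standing notation for congruences; choose another letter.
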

\begin{proof}
The statements i) and ii) follow from Proposition \ref{principal} i) and ii), respectively.
The statement iii) is a consequence of i) and ii) since $\H=\R\cap \L$. The statement (iv) follows from Proposition \ref{principal} iii). The statement (v) follows from the fact that $\oor$ is a finite monoid.
\end{proof}

\begin{proposition}\label{greenorthnum} Let $\sigma\in\oor$ and $k=\rk(\sigma)$. Then
\begin{enumerate}[{\rm i)}]\vspace{-2mm}
    \item $|\mathcal{L}(\sigma)| =|\mathcal{R}(\sigma)| = \begin{cases}
                                    {m\choose k}2^kk!   &   \mbox{if } k =0, 1, \ldots, m-1\\
                                    2^{m-1}m!   &   \mbox{if } k =m,n~.
                                    \end{cases}$
    \vspace{-2mm}
    \item The number of $\L$-classes equals that of $\R$-classes, and is $1+\sum_{k=0}^m{m \choose k}2^k$~.
    \vspace{-2mm}
    \item  $|\mathcal{H}(\sigma)| = \begin{cases}
                                    k!   &   \mbox{if } k =0, 1, \ldots, m\\
                                    2^{m-1}m!    &   \mbox{if } k = n~.
                                    \end{cases}  $
    \vspace{-2mm}
    \item The number of $\H$-classes is $1+\frac{1}{2}\cdot 4^m+\sum_{k=0}^{m-1}{m \choose k}^24^k$~.
    \vspace{-2mm}
    \item  $|\mathcal{D}(\sigma)| =|\mathcal{J}(\sigma)| = \begin{cases}
                                    {m\choose k}^24^kk!   &   \mbox{if } k =0, 1, \ldots, m-1\\
                                    \frac{1}{2}\cdot 4^m m!  &   \mbox{if } k =m\\
                                    2^{m-1}m!    &   \mbox{if } k = n~.
                                    \end{cases}$
    \vspace{-2mm}
    \item The number of $\J$-classes equals that of $\D$-classes, and is $m+3$~.
\end{enumerate}
\end{proposition}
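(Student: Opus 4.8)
The plan is to reduce every cardinality in the statement to a combinatorial count governed by the explicit descriptions of Green's classes in Proposition \ref{green}, together with two elementary enumerations of admissible subsets. First I would record the backbone count: an admissible $k$-subset of $\n$ is obtained by selecting $k$ of the $m$ complementary pairs $\{i,\bar i\}$ and then one element from each selected pair, so there are exactly $\binom{m}{k}2^k$ admissible $k$-subsets for $0\le k\le m$. For $k=m$ this gives $2^m$ admissible $m$-subsets, and since the type of such a subset is the parity of the number of its elements exceeding $m$, flipping the chosen element in a single pair toggles the type; hence the two types partition the $2^m$ subsets into two equal classes of size $2^{m-1}$. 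These two facts, applied to domains and to images, drive all six parts.

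For parts i)--iv) I would argue as follows. By Proposition \ref{green} i), $\L(\sigma)$ consists of all $\tau\in\oor$ with $I(\tau)=I(\sigma)$; fixing the domain, such a $\tau$ is specified by a choice of admissible image $J(\tau)$ together with a bijection of $I(\sigma)$ onto it. When $\rk(\sigma)=k<m$ there are $\binom{m}{k}2^k$ choices of image and $k!$ bijections, giving $\binom{m}{k}2^k k!$; when $\rk(\sigma)=m$ the image must be of the same type as $I(\sigma)$, leaving $2^{m-1}$ choices and $m!$ bijections; and when $\rk(\sigma)=n$ the class is exactly $\w'$, of order $2^{m-1}m!$. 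The count for $\R(\sigma)$ is identical by the symmetric statement Proposition \ref{green} ii), and iii) follows because $\H(\sigma)$ fixes both domain and image and so is counted by the $k!$ (resp.\ $m!$, resp.\ $|\w'|$) bijections between them. The class counts in ii) and iv) are then obtained by enumerating the indexing data: $\L$-classes correspond to admissible domains (summing $\binom{m}{k}2^k$ over $0\le k\le m$, plus the single domain $\n$), while $\H$-classes correspond to admissible ordered (domain, image) pairs, contributing $\binom{m}{k}^2 4^k$ for $k<m$, the same-type pairs for $k=m$, and one for $k=n$.

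For v) and vi) I would invoke the egg-box structure. Since $\oor$ is finite, $\D=\J$ by Proposition \ref{green} v), and within a single $\D$-class all $\L$-classes have equal size, so $|\D(\sigma)|$ equals the number of $\L$-classes inside $\D(\sigma)$ times the common $\L$-class size from part i). For $\rk(\sigma)=k<m$ the $\D$-class comprises all rank-$k$ elements, with $\binom{m}{k}2^k$ admissible domains, yielding $\binom{m}{k}^2 4^k k!$; for $\rk(\sigma)=m$ Proposition \ref{green} iv) confines the class to one type, so one multiplies the same-type domain count by the corresponding $\L$-class size; and for $\rk(\sigma)=n$ the class is $\w'$. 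Finally vi) counts the $\D=\J$ classes directly from Proposition \ref{green} iv) and Corollary \ref{ideal}: one class for each rank $0,1,\dots,m-1$, two classes at rank $m$ (types {\rm I} and {\rm II}), and one at rank $n$, for a total of $m+3$.

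The main obstacle is the rank-$m$ layer, where the type invariant must be tracked throughout: I must verify that admissible $m$-subsets split evenly into the two types (the toggling argument above), that within a fixed $\D$-class only same-type domains and images occur (Proposition \ref{green} iv) and Corollary \ref{ideal}), and that the rank-$n$ stratum behaves as a single group $\H$-class equal to $\w'$ rather than as a product of independent domain and image data. Once the type bookkeeping at rank $m$ and the special role of $\w'$ at rank $n$ are handled, the remaining steps are the two elementary enumerations combined with the standard fact that all $\L$-, $\R$-, and $\H$-classes inside a common $\D$-class share the same cardinality.
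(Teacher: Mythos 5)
Your overall strategy is the same as the paper's: identify each Green's class with its defining data from Proposition \ref{green} (domain for $\L$, image for $\R$, the pair for $\H$, rank-plus-type for $\D=\J$), count admissible $k$-subsets as $\binom{m}{k}2^k$ by choosing $k$ of the $m$ pairs $\{i,\bar i\}$ and one element from each, and observe that the $2^m$ admissible $m$-subsets split evenly into the two types. Parts i)--iv) and vi) go through exactly as you describe and agree with the paper.

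There is, however, one place where your argument does not deliver the formula as stated, and you leave the product uncomputed precisely there. In part v) at rank $m$ you say one ``multiplies the same-type domain count by the corresponding $\L$-class size,'' i.e.\ $2^{m-1}\cdot 2^{m-1}m! = 4^{m-1}m!$. But the proposition claims $\tfrac12\cdot 4^m m! = 2\cdot 4^{m-1}m!$ for that $\D$-class --- twice your answer. By Proposition \ref{green} iv) a rank-$m$ $\D$-class is confined to a single type, so your egg-box count is the right one; the quantity $\tfrac12\cdot 4^m m!$ is the number of \emph{all} rank-$m$ elements, i.e.\ the union of the two $\D$-classes (this is also what the paper's own proof actually counts at that step). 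The case $n=4$ confirms it: the type-{\rm I} rank-$2$ elements of ${\cal OR}_4$ are listed explicitly in the proof of Lemma \ref{congonw'} and there are $8=4^{m-1}m!$ of them, not $16=\tfrac12\cdot 4^m m!$. So you must either carry out the multiplication and flag that your method yields $4^{m-1}m!$ rather than the stated $\tfrac12\cdot 4^m m!$, or explain why a single rank-$m$ $\D$-class should contain both types (it does not). As written, your proof sketch silently stops one step short of the only point of genuine disagreement with the statement being proved.
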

\begin{proof}

We prove i) and ii) for $\L$-classes only since the proof for $\R$-classes is similar. From Proposition \ref{green}
i) it follows that the $\L$-class containing $\sigma$ is uniquely determined by $I(\sigma)$, which must be an admissible subset of $\n$.
For each $k=0,1,\dots,m-1$, the number of
$\L$-classes is ${m \choose k}2^k$ and each $\L$-class consists of
${m \choose k}2^kk!$ elements of rank $k$. For $k=m$, the number of
$\L$-classes is $2^m$ and each $\L$-class consists of $2^{m-1}m!$
elements of rank $m$. For $k=n$ the number of $\L$-classes is $1$
and the $\L$-class is the orthogonal Weyl group $\w^\p$ consisting
of $2^{m-1}m!$ elements.

We now prove iii) and iv). From Proposition \ref{green} iii) it follows that the $\H$-class
containing $\sigma$ is uniquely determined by $I(\sigma)$ and $J(\sigma)$: they must be admissible subsets of $\n$, and in addition are of the same type when $|I(\sigma)|=m$.
For each $k=0,1,\dots,m-1$, the number of
$\H$-classes is ${m \choose k}^24^k$ and each $\H$-class consists of
$k!$ elements of rank $k$. For $k=m$, the number of  $\H$-classes is
$\frac{1}{2}\cdot 4^m$ and each $\H$-class consists of $m!$ elements of
rank $m$. For $k=n$ the number of $\H$-classes is $1$ and the
$\H$-class is $\w^\p$.

We next show v) and vi). From Proposition \ref{green} iv) it follows that the
$\J$-class (resp. $\D$-classes) containing $\sigma$ is uniquely determined by
the rank of $\sigma$, the domain and image being admissible subsets of $\n$, and of the same
type when the rank of $\sigma$ is $m$. There are ${m\choose
k}^24^kk!$ elements of rank $k$ for $k=0,1,\dots,m-1$, and there are
$\frac{1}{2}\cdot 4^m m!$ elements of rank $m$ with the same type of
domain and image. For $k=n$ the number of $\J$-classes is $1$ and
the $\J$-class is $\w^\p$, with $|\w'| = 2^{m-1}m!$.
\end{proof}

\begin{proposition}\label{hclassgroup}
Let $H$ be an $\H$-class of $\oor$. If $H$ contains an idempotent of rank $k$, then $H = \w'$ for $k=n$, or $H$ is isomorphic to the symmetric group ${S}_k$ for $k=1,\dots,m$.
\end{proposition}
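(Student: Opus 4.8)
The plan is to combine the standard structure theory of Green's relations with the explicit description of $\H$-classes already obtained in Proposition \ref{green}. First I would invoke the general semigroup fact that an $\H$-class containing an idempotent $e$ is precisely the maximal subgroup with identity $e$; in particular $H$ is automatically a group, so the entire task reduces to identifying this group concretely.

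Next I would pin down the idempotent. By Lemma \ref{idempotent}, the rank-$k$ idempotent contained in $H$ must be the identity transformation $\varepsilon_A$ of some admissible subset $A\subseteq\n$ with $|A|=k$. Since $I(\varepsilon_A)=J(\varepsilon_A)=A$, Proposition \ref{green} iii) gives the explicit description
\[
    H = \mathcal{H}(\varepsilon_A) = \{\tau\in\oor\mid I(\tau)=A \text{ and } J(\tau)=A\}.
\]
Thus every element of $H$ is an injective partial map whose domain and image both equal $A$, that is, a permutation of the finite set $A$.

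The heart of the argument is then to show that $H$ consists of \emph{all} permutations of $A$, i.e.\ that every permutation of $A$ actually lies in $\oor$. For $k=n$ this is immediate: $A=\n$, the rank-$n$ elements of $\oor$ are exactly those of $\w'$, and all of them share domain and image $\n$, so $H=\w'$. For $1\le k\le m-1$ orthogonality (condition 1 in the definition) requires only that the domain and image be admissible, and both equal the admissible set $A$; hence every permutation of $A$ is orthogonal. For $k=m$ orthogonality (condition 2) additionally demands that domain and image be of the same type, which holds trivially since they are literally the same set $A$. Combining these inclusions with the reverse inclusion from the previous paragraph yields $H=\mathrm{Sym}(A)$, the symmetric group on the $k$-element set $A$, which is isomorphic to $S_k$.

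I expect the only point requiring care — hardly a genuine obstacle — to be the rank-$m$ case, where one must not forget to verify the type condition before concluding that a permutation of $A$ is orthogonal; this is settled at once by observing that the domain and image coincide. Everything else is bookkeeping with the descriptions already supplied by Lemma \ref{idempotent} and Proposition \ref{green}.
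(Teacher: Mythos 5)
Your proposal is correct and follows essentially the same route as the paper: identify the idempotent as $\varepsilon_A$ for an admissible $k$-set $A$, use Proposition \ref{green} iii) to see that $H$ consists exactly of the permutations of $A$, and conclude $H\cong S_k$ (or $H=\w'$ when $k=n$). You are merely more explicit than the paper in checking that every permutation of $A$ satisfies the admissibility and (for $k=m$) type conditions, a detail the paper leaves implicit.
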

\begin{proof}
If $k=n$, it is obvious that $H$ is just the unit group of $\oor$ and hence $H=\w^\p$. If $k=1,\dots,m$, let $\varepsilon_A\in H$ be an idempotent of $H$ where $A=\{a_1,\dots,a_k\}\subseteq\n$. By Proposition
\ref{green} (iii) we find $\sigma\in H$ if and only if $\sigma$ has the form
$$\sigma=\left(
\begin{array}{cccc}
  a_1 & a_2 & \dots & a_k \\
  a_{\mu(1)} & a_{\mu(2)} & \dots & a_{\mu(k)}
\end{array}\right),\quad\mu\in{S}_k.
$$
We then have a bijection of $H$ onto ${S}_k$ defined by $\sigma\mapsto \mu.$
\end{proof}

\section{Congruences on  $\oor$}{\label{conOrthogonal}}
For every equivalence relation $\rho$ on a semigroup $S$ and
$\sigma\in S$, denote by $\rho(\sigma)$ the equivalence class of
$\sigma$ in $\rho$. The following lemma will be useful soon.

\begin{lemma}[\cite{GM}, page 91]\label{congideal}
Let $K$ be an equivalence class of a congruence $\rho$ on a semigroup $S$. If $K$ contains an
ideal of $S$, then $K$ is an ideal of $S$. Moreover, at
most one class of $\rho$ can be an ideal.
\end{lemma}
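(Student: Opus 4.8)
The plan is to derive both assertions directly from the defining properties of a congruence and of an ideal, without using any special structure of $S$. First I would recall that $\rho$ being a congruence means it is compatible with multiplication: whenever $a\mathrel{\rho}b$, one has $sa\mathrel{\rho}sb$ and $as\mathrel{\rho}bs$ for every $s\in S$. An ideal $I$ is a nonempty set with $SI\subseteq I$ and $IS\subseteq I$; in the present setting every ideal in sight contains the zero of $\oor$, so nonemptiness is automatic.

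For the first claim I would let $I\subseteq K$ be the ideal and fix any $x\in I$. Since $K$ is an equivalence class containing $x$, every $k\in K$ satisfies $k\mathrel{\rho}x$. Given an arbitrary $s\in S$, compatibility of $\rho$ yields $sk\mathrel{\rho}sx$; because $I$ is an ideal, $sx\in I\subseteq K$, so $sk$ is $\rho$-related to an element of $K$ and hence lies in $K$ itself. The symmetric computation with $ks\mathrel{\rho}xs$ and $xs\in I$ gives $ks\in K$. Together these show $SK\subseteq K$ and $KS\subseteq K$, so $K$ is an ideal.

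For the second claim I would suppose that $K_1$ and $K_2$ are both $\rho$-classes that happen to be ideals, and pick $k_1\in K_1$, $k_2\in K_2$ (possible since equivalence classes are nonempty). The product $k_1k_2$ then lies in $K_1$ because $K_1$ is an ideal, and in $K_2$ because $K_2$ is an ideal, so $K_1\cap K_2\ne\emptyset$. Two $\rho$-classes that intersect must coincide, whence $K_1=K_2$.

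I expect no genuine obstacle here: the statement is a formal consequence of the two definitions. The only points deserving care are the nonemptiness of the ideal, which is needed to select the comparison element $x$, and the correct one-sided use of compatibility, multiplying a fixed $s$ against the related pair $k\mathrel{\rho}x$ on each side separately. The lemma will be used afterwards merely to identify the unique class of a congruence that can serve as the kernel ideal.
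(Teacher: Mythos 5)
Your argument is correct and complete: both the ideal property of $K$ (via compatibility of $\rho$ applied to $k\mathrel{\rho}x$ with $x$ in the contained ideal) and the uniqueness (via $k_1k_2\in K_1\cap K_2$) are handled properly. The paper itself gives no proof of this lemma --- it is quoted from the reference [GM, page 91] --- and your proof is exactly the standard argument one would find there, so there is nothing substantive to contrast.
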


Let $\sigma,\tau$ be any two elements of $\oor$. Note that $\sigma$ and $\tau$ lie in the same $\H$-class if and only if, by Proposition  \ref{green} (iii), there exists some $k=1,\dots,m,n$ such that
\begin{equation}\label{hclass}
\sigma=\left(
\begin{array}{cccc}
  a_1 & a_2 & \dots & a_k \\
  b_1 & b_2 & \dots & b_k
\end{array}\right)\quad\text{and}\quad
\tau=\left(
\begin{array}{cccc}
  a_1 & a_2 & \dots & a_k \\
  b_{\mu(1)} & b_{\mu(2)} & \dots & b_{\mu(k)}
\end{array}\right)
\end{equation}
 where $\mu\in\w^\p$ if $k=n$, and $\mu\in{S}_k$ if $k=1,\dots,m$.

From now on, for simplicity, the connection (\ref{hclass}) between $\sigma$ and $\tau$ is simply written as $\tau=\mu(\sigma)$.
Using this notation, we now define 4 kinds of congruences on $\oor$.
\begin{definition}
For each normal subgroup $N$ of ${S}_k, k=1,\dots,m-1$ we define an equivalence relation $\equiv_N$ on $\oor$: for $\sigma,\tau\in\oor$,
\begin{enumerate}[{\rm\indent 1)}]
\vspace{-2mm}
  \item if $\rk(\sigma)<k$, then $\sigma\equiv_N\tau$ if and only if
$\rk(\tau)<k$;
\vspace{-2mm}
  \item if $\rk(\sigma)>k$, then $\sigma\equiv_N\tau$ if and only if
$\sigma=\tau$;
\vspace{-2mm}
  \item if $\rk(\sigma)=k$, then
$\sigma\equiv_N\tau$ if and only if $\sigma\H\tau$ and
$\tau=\mu(\sigma)$ for some $\mu\in N$.
\end{enumerate}
\end{definition}

\begin{definition}{\label{def43}}
For each normal subgroup $N$ of ${S}_m$ we define two equivalence relations $\equiv_N^{\rm
I}$ and $\equiv_N^{\rm II}$ on $\oor$: for $\sigma,\tau\in\oor$,
\begin{enumerate}[{\rm\indent 1)}]
\vspace{-2mm}
  \item if $\rk(\sigma)<m$, then $\sigma\equiv_N^{\rm I}\tau$ {\rm (resp.} $\sigma\equiv_N^{\rm II}\tau${\rm )}  if and only if $\rk(\tau)<m$ or $\rk(\tau)=m$ with $\tp(\tau)={\rm II}$ {\rm (resp.} $\tp(\tau)={\rm I}${\rm )};
\vspace{-2mm}
    \item if $\rk(\sigma)=m$ and $\tp(\sigma)={\rm I}$
{\rm (resp.} $\tp(\tau)={\rm II}${\rm )}, then $\sigma\equiv_N^{\rm I}\tau$
{\rm (resp.} $\sigma\equiv_N^{\rm II}\tau${\rm )}  if and only if $\sigma\H\tau$
and $\tau=\mu(\sigma)$ for some $\mu\in N$;
\vspace{-2mm}
    \item if $\rk(\sigma)=m$ and $\tp(\sigma)={\rm II}$ {\rm (resp.} $\tp(\tau)={\rm I}${\rm )}, then $\sigma\equiv_N^{\rm I}\tau$ {\rm (resp.} $\sigma\equiv_N^{\rm II}\tau${\rm )}  if and only if $\rk(\tau)<m$ or $\rk(\tau)=m$ with $\tp(\tau)={\rm II}$ {\rm (resp.} $\tp(\tau)={\rm I}${\rm )};
\vspace{-2mm}
    \item if $\rk(\sigma)=n$, then $\sigma\equiv_N^{\rm I}\tau$ {\rm (resp.} $\sigma\equiv_N^{\rm II}\tau${\rm )}  if and only if $\sigma=\tau$.
\end{enumerate}
\end{definition}

\begin{definition}
For any two normal subgroups $N_1,N_2\lhd{S}_m$ we define the equivalence relation $\equiv_{N_1,N_2}$ on $\oor$: for $\sigma,\tau\in\oor$,
\begin{enumerate}[{\rm\indent 1)}]
\vspace{-2mm}
  \item if $\rk(\sigma)<m$, then $\sigma\equiv_{N_1,N_2}\tau$ if and only
if $\rk(\tau)<m$;
\vspace{-2mm}
  \item if $\rk(\sigma)=m$, then
$\sigma\equiv_{N_1,N_2}\tau$ if and only if $\sigma\H\tau$ and
$\tau=\mu(\sigma)$ for some $\mu\in N_1$ if $\tp(\sigma)={\rm I}$,
and $\mu\in N_2$ if $\tp(\sigma)={\rm II}$;
\vspace{-2mm}
  \item if $\rk(\sigma)=n$, then $\sigma\equiv_{N_1,N_2}\tau$ if and only
if $\sigma=\tau$.
\end{enumerate}
\end{definition}

The above equivalence relations are all well-defined congruences on $\oor$.
By definition neither of the congruences $\equiv_N, \equiv_N^{\rm I}, \equiv_N^{\rm II}$, and $\equiv_{N_1, N_2}$ is uniform. Moreover, neither of the congruences $\equiv_N^{\rm I}$ and $\equiv_N^{\rm II}$ is the identity congruence since there are at least two elements $\sigma, \tau$ of rank $k$ with $k\ge 1$ such that $\sigma\equiv_N^{\rm I}0$ and $\tau\equiv_N^{\rm II}0$. Also, $\equiv_N$ is the identity congruence if and only if $k=1$ and $N={S}_1$; the congruence $\equiv_{N_1, N_2}$ is the identity congruence if and only if $m=1$.

\begin{remark}
Unless $n = 4$, there are no congruences defined above for normal
subgroups of the unit group $\w'$ of $\oor$.
\end{remark}

\begin{lemma}\label{congideal2}
Let $\sigma,\tau\in \oor$ be two elements with $k=\rk (\sigma)>\rk (\tau)$. If $\rho$ is a congruence on $\oor$ and $\sigma\equiv_\rho\tau$, then $\I_k\subseteq\rho (\sigma)$ for $k=1,2,\dots,m-1,n,$ and $\I_m^{\rm I}\subseteq\rho (\sigma)$ or $\I_m^{\rm II}\subseteq\rho (\sigma)$ for
$k=m$.
\end{lemma}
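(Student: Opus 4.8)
The plan is to reduce the entire statement to the single assertion that $\sigma\equiv_\rho 0$, where $0=\varepsilon_\emptyset$ is the zero of $\oor$. Once this is known, multiplying $\sigma\equiv_\rho 0$ on both sides by arbitrary elements gives $\alpha\sigma\beta\equiv_\rho \alpha\,0\,\beta=0\equiv_\rho\sigma$ for all $\alpha,\beta\in\oor$, so that the principal ideal $\oor\sigma\oor$ is contained in $\rho(\sigma)$. By Proposition \ref{principal} iii) this principal ideal equals $\I_k$ when $k\ne m$ and equals $\I_m^{\tp(\sigma)}$, which is $\I_m^{\rm I}$ or $\I_m^{\rm II}$, when $k=m$; this is exactly the asserted conclusion. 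Thus the real work is to establish $\sigma\equiv_\rho 0$.

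First I would pass from $\sigma$ to the idempotent $e=\varepsilon_{I(\sigma)}$ of rank $k$. Since the reverse of an orthogonal partial map is again orthogonal, $\oor$ is closed under forming inverse partial maps, so $\sigma^{-1}\in\oor$ and $e=\sigma^{-1}\sigma$, with $\tp(e)=\tp(\sigma)$ when $k=m$. Left-multiplying $\sigma\equiv_\rho\tau$ by $\sigma^{-1}$ and then right-multiplying by $e$ yields $e\equiv_\rho\zeta$, where $\zeta=\sigma^{-1}\tau e$ satisfies $\rk\zeta\le\rk\tau<k$ by Lemma \ref{rank} and, by tracking domains and images through Proposition \ref{principal} i)--ii), has $I(\zeta),J(\zeta)\subseteq I(e)$; that is, $\zeta$ lies in the local submonoid $e\oor e$. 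Because $\oor$ is finite, some power $\zeta^{p}=f$ is idempotent, so $f=\varepsilon_B$ for an admissible $B\subseteq I(e)$ with $|B|<k$ by Lemma \ref{idempotent}. Raising the congruence $e\equiv_\rho\zeta$ to the $p$-th power and using $e^{p}=e$ gives the key idempotent relation $e\equiv_\rho f=\varepsilon_B$ with $\rk f<\rk e$.

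The heart of the argument is then a descent from $\varepsilon_B$ to $\varepsilon_\emptyset=0$. Every subset of the admissible set $I(e)$ is admissible, and by Proposition \ref{hclassgroup} the group $\H(e)$ equals $S_k$ for $k\le m$ and equals $\w'$ for $k=n$; each $w$ in this group satisfies $wew^{-1}=e$ and $w\varepsilon_B w^{-1}=\varepsilon_{w(B)}$, so conjugating $e\equiv_\rho\varepsilon_B$ produces $e\equiv_\rho\varepsilon_{w(B)}$ for every admissible set $w(B)$ in the orbit of $B$. Choosing $w$ with $|B\cap w(B)|<|B|$ and multiplying the resulting relation $\varepsilon_B\equiv_\rho\varepsilon_{w(B)}$ on the right by $\varepsilon_B$ gives $\varepsilon_{B}\equiv_\rho\varepsilon_{B\cap w(B)}$, hence $e\equiv_\rho\varepsilon_{B\cap w(B)}$ with an idempotent of strictly smaller rank. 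Iterating drives the rank down to $0$, so $e\equiv_\rho 0$. Finally $\sigma=\sigma e\equiv_\rho\sigma\cdot 0=0$, which completes the reduction begun in the first paragraph.

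I expect the descent to be the main obstacle, since it is where the restricted transitivity of $\H(e)$ must be handled. For $k\le m-1$ the group $S_k$ acts as the full symmetric group on $I(e)$ and a $w$ with $|B\cap w(B)|<|B|$ is immediate; for $k=m$ one must respect the partition of admissible $m$-subsets into types when choosing $w$; and for $k=n$ one must verify that $\w'$ can move $B$ to an admissible set meeting it in fewer points. These are the places requiring care, while the extraction of the idempotent relation $e\equiv_\rho f$ and the closing ideal computation through Proposition \ref{principal} iii) are routine.
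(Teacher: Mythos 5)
Your proof is correct in substance and takes a genuinely different route from the paper's. The paper argues by induction on rank: it chooses $a_0\in\im(\sigma)\setminus\im(\tau)$, forms an admissible $(j+1)$-subset $A\subseteq\im(\sigma)$, and notes that $\varepsilon_A\sigma\equiv_\rho\varepsilon_A\tau$ has strictly smaller rank on the right, so $\varepsilon_A\sigma$ falls into the class $\rho(0)$ built up at the previous stage; Lemma \ref{congideal} and Corollary \ref{ideal} then absorb the whole principal ideal $\oor\varepsilon_A\sigma\oor$. You instead exploit the inverse-monoid structure: you convert $\sigma\equiv_\rho\tau$ into an idempotent relation $e\equiv_\rho\varepsilon_B$ with $|B|<\rk(e)$ and descend by conjugating with the maximal subgroup $\H(e)$ and intersecting. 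Your route makes the appearance of the typed ideals at rank $m$ transparent (they are exactly the principal ideals $\oor\sigma\oor$ of Proposition \ref{principal} iii)) and is closer to standard trace-of-a-congruence arguments; the price is that you must verify that $\H(e)$ moves every nonempty admissible $B\subsetneq I(e)$, whereas the paper needs no analysis of the maximal subgroups at all.

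The one step you flag but do not close is the $k=n$ descent. For $m\ge 2$ it does go through: if $|B|<m$ pick $b\in B$ and $c$ with $c,\bar c\notin B$ and take $w=(b\,c)(\bar b\,\bar c)\in\w'$; if $|B|=m$ pick distinct $b,b'\in B$ and take the double sign change $w=(b\,\bar b)(b'\,\bar{b'})\in\w'$; in either case $w(B)\ne B$, so $|B\cap w(B)|<|B|$. For $m=1$, however, $\w'$ is trivial and no such $w$ exists --- and there the statement itself fails: in ${OR}_2=\{0,\varepsilon_{\{1\}},\varepsilon_{\{2\}},\epsilon\}$ the partition $\{\epsilon,\varepsilon_{\{1\}}\}$, $\{\varepsilon_{\{2\}},0\}$ is a congruence with $\epsilon\equiv_\rho\varepsilon_{\{1\}}$ yet $\I_2\not\subseteq\rho(\epsilon)$. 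The paper's own induction has the same blind spot (when $j+1=m$ the principal ideal $\oor\varepsilon_A\sigma\oor$ is $\I_m^{\rm I}$ or $\I_m^{\rm II}$, not $\I_{j+1}$), so your argument is exactly as correct as the paper's; just record the restriction $m\ge 2$ when $k=n$.
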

\begin{proof}
Since $\sigma\in\I_k$  (resp. $\sigma\in \I_m^{\rm I}$ or
$\sigma\in\I_m^{\rm II}$), it is enough to show that
$\I_k\subseteq\rho (0)$ (resp. $\I_m^{\rm I}\subseteq\rho (0)$ or
$\I_m^{\rm II}\subseteq\rho (0)$).

Let $l=\rk (\tau)$. We prove that $\I_i\subseteq\rho (0)$ for all $0\le i\leq l$ by induction on
$i$. Clearly the ideal $\I_0\subseteq\rho (0)$, which is then an ideal by
Lemma \ref{congideal}. Now, let $0<j<l$ and assume that for all $i$ with
$i\leq j<l$ we have $\I_i\subseteq\rho (0)$. We will show that
$\I_{j+1}\subseteq\rho (0)$.

Realizing $\im (\sigma)\backslash\im (\tau)\neq \emptyset$, we may choose an element $a_0\in\im (\sigma)\backslash\im (\tau)$ and form a set $A=\{a_0,a_1,\dots,a_j\}\subseteq\im (\sigma)$. It follows immediately that $\rk (\varepsilon_A\sigma)=j+1$, $\rk (\varepsilon_A\tau)\leq j$, and $\varepsilon_A\sigma\equiv_\rho\varepsilon_A\tau$. From the induction hypothesis it follows that $\varepsilon_A\tau\in\rho (0)$, so $\varepsilon_A\sigma\in\rho (0)$. From Corollary \ref{ideal} we conclude that
$\I_{j+1}=\oor\varepsilon_A\sigma\oor \subseteq \rho (0)$.

Hence $\I_l\subseteq\rho (0)$. In particular, $\tau\in\rho (0)$, so $\sigma\in\rho (0)$ since $\sigma\equiv_\rho\tau$. It follows immediately that $\I_{k}=\oor\sigma \oor\subseteq\rho(0)$ for
$k=1,2,\dots,m-1,n$. If $k=m$, we obtain that $\I_m^{\rm I} =\oor\sigma \oor \subseteq\rho (\sigma)=\rho(0)$ when $\sigma$ is of type I, and that $\I_m^{\rm II}=\oor\sigma\oor\subseteq\rho (\sigma)$ when $\sigma$ is of type II.
\end{proof}

If $\rho$ is a congruence on $\oor$, then by Lemma \ref{congideal}
it contains at most one congruence class that is an ideal of
$\oor$. In fact, since $\{0\}$ is an ideal of $\oor$ and $\{0\}\subseteq\rho (0)$, Lemmas
\ref{congideal} and \ref{congideal2} show that the congruence class $\rho (0)$ is an ideal.
\begin{lemma}\label{noidealh}
Let $\rho$ be a congruence on $\oor$. If $\sigma\equiv_\rho\tau$ and $\sigma\notin
\rho (0)$, then $\sigma\H\tau$.
\end{lemma}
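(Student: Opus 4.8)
The plan is to obtain the $\H$-relation from Proposition \ref{green} iii) by establishing the two set-equalities $J(\sigma)=J(\tau)$ and $I(\sigma)=I(\tau)$. The engine for both is Lemma \ref{congideal2} together with the observation recorded just above the statement, namely that $\rho(0)$ is the unique ideal class of $\rho$. Since $0\in\rho(0)$, the hypothesis $\sigma\notin\rho(0)$ gives $\sigma\neq 0$, so $\rk(\sigma)\geq 1$ and $I(\sigma),J(\sigma)$ are nonempty admissible sets; moreover $\sigma\equiv_\rho\tau$ forces $\rho(\sigma)=\rho(\tau)$, so $\tau\notin\rho(0)$ as well. The central idea is to turn any discrepancy between the images (or domains) of $\sigma$ and $\tau$ into a \emph{strict} drop of rank by multiplying the congruence with a suitable idempotent, and then to invoke Lemma \ref{congideal2} to force $\sigma\in\rho(0)$, a contradiction.

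For the images I would argue as follows. Because $J(\sigma)$ is admissible, $\varepsilon_{J(\sigma)}\in\oor$, and multiplying $\sigma\equiv_\rho\tau$ on the left by $\varepsilon_{J(\sigma)}$ yields $\varepsilon_{J(\sigma)}\sigma\equiv_\rho\varepsilon_{J(\sigma)}\tau$. A direct computation gives $\varepsilon_{J(\sigma)}\sigma=\sigma$, while $\varepsilon_{J(\sigma)}\tau$ has image $J(\sigma)\cap J(\tau)$, so $\rk(\varepsilon_{J(\sigma)}\tau)=|J(\sigma)\cap J(\tau)|$. If $J(\sigma)\not\subseteq J(\tau)$, this intersection is a proper subset of $J(\sigma)$, whence $\rk(\varepsilon_{J(\sigma)}\tau)<\rk(\sigma)$. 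Applying Lemma \ref{congideal2} to the pair $\sigma\equiv_\rho\varepsilon_{J(\sigma)}\tau$ then yields $\I_k\subseteq\rho(\sigma)$ when $k=\rk(\sigma)\in\{1,\dots,m-1,n\}$, and $\I_m^{\rm I}\subseteq\rho(\sigma)$ or $\I_m^{\rm II}\subseteq\rho(\sigma)$ when $k=m$. Each of these ideals contains $0$, so $\sigma\in\rho(0)$, contradicting the hypothesis. Hence $J(\sigma)\subseteq J(\tau)$. Interchanging the roles of $\sigma$ and $\tau$, which is legitimate precisely because $\tau\notin\rho(0)$, gives the reverse inclusion, so $J(\sigma)=J(\tau)$.

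For the domains I would run the mirror-image argument, right-multiplying by $\varepsilon_{I(\sigma)}$: here $\sigma\varepsilon_{I(\sigma)}=\sigma$ while $I(\tau\varepsilon_{I(\sigma)})=I(\sigma)\cap I(\tau)$, so a failure of $I(\sigma)\subseteq I(\tau)$ again produces a strict rank drop and the same contradiction through Lemma \ref{congideal2}; symmetry then delivers $I(\sigma)=I(\tau)$. With both $J(\sigma)=J(\tau)$ and $I(\sigma)=I(\tau)$ in hand, Proposition \ref{green} iii) gives $\sigma\H\tau$. I do not anticipate a genuine obstacle; the only points requiring care are the boundary case $\rk(\sigma)=n$, where $J(\sigma)=\n$ and $\varepsilon_{J(\sigma)}$ is the identity, so the argument degenerates into applying Lemma \ref{congideal2} directly to $\sigma\equiv_\rho\tau$ whenever $\rk(\tau)<n$, and the rank-$m$ case, where one must verify that the idempotent still lies in $\oor$ and quote Lemma \ref{congideal2} in its type-sensitive form.
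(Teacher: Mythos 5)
Your proposal is correct and follows essentially the same strategy as the paper's proof: multiply the congruence $\sigma\equiv_\rho\tau$ by a suitable element (an idempotent on the left for images, a rank-preserving factor on the right for domains) so that any discrepancy between $I(\sigma),I(\tau)$ or $J(\sigma),J(\tau)$ produces a strict rank drop, and then invoke Lemma \ref{congideal2} to force $\sigma\in\rho(0)$, a contradiction. The only cosmetic difference is that the paper uses $\varepsilon_{\im(\tau)}$ and an explicit partial inverse of $\tau$ where you use $\varepsilon_{J(\sigma)}$ and $\varepsilon_{I(\sigma)}$ together with a symmetry argument; both routes end by citing Proposition \ref{green} iii).
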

\begin{proof}
First, by Lemma \ref{congideal2} we find $\rk (\sigma)=\rk (\tau)$. We claim that $\im (\sigma)=\im (\tau)$. If $\im (\sigma)\ne\im (\tau)$, then $\rk (\sigma)=\rk (\tau)\leq m$ since, otherwise, we have $\rk (\sigma)=\rk (\tau)=n$, which implies $\im (\sigma)=\im (\tau)=\n$. Without loss of generality we may
assume $\im (\sigma)\backslash\im (\tau) \neq\emptyset$. Let
$A=\im (\tau)$. Then
$\varepsilon_A\sigma\equiv_\rho\varepsilon_A\tau=\tau$. Hence
$\sigma\equiv_\rho\varepsilon_A\sigma$. On the other hand,
$\rk (\varepsilon_A\sigma)<\rk (\sigma)$ by the choice of $A$.
Applying Lemma \ref{congideal2} we get $\sigma\in \rho (0)$, which contradicts the assumption.

Let $\im (\sigma)=\{c_1,\dots,c_k\}$. Then we may assume that
$$\sigma=\left (
\begin{array}{cccc}
  a_1 & a_2 & \dots & a_k \\
  c_1 & c_2 & \dots & c_k
\end{array}\right)\qaq\tau=\left (
\begin{array}{cccc}
  b_1 & b_2 & \dots & b_k \\
  c_1 & c_2 & \dots & c_k
\end{array}\right).
$$

We next claim that $\dom (\sigma) = \dom (\tau)$. If not, then we have
$\rk (\sigma)=\rk (\tau)\leq m$, since $\rk (\sigma)=\rk (\tau)=n$
implies $\dom (\sigma)=\dom (\tau)=\n$. Without loss of generality we
may assume that $b_1\notin\dom (\sigma)$, and let
$$\delta=\left (
\begin{array}{cccc}
  c_1 & c_2 & \dots & c_k \\
  b_1 & b_2 & \dots & b_k
\end{array}\right).$$
We have
$\sigma\delta\equiv_\rho\tau\delta$. On the other hand,
$\rk (\tau\delta)=k$ while $\rk (\sigma\delta)<k$ since
$b_1\notin\dom (\sigma)$. Applying Lemma \ref{congideal2}, we get
$\I_k\subseteq\rho (0)$  ($\I_m^{\rm I}\subseteq\rho (0)$ or $\I_m^{\rm
II}\subseteq\rho (0)$ if $k=m$), and hence $\sigma\in \rho (0)$, which is absurd.

The desired result follows from Proposition  \ref{green} iii).
\end{proof}

\begin{lemma}\label{inequality}
Let $\rho$ be a congruence on $\oor$ and let $\sigma\in\oor$ with $\rk(\sigma)=k$. If there exists $\tau\in\oor$ such that $\tau\neq\sigma$ and $\tau\equiv_\rho\sigma$, then
$\rho (0)\supseteq\I_m^{\rm I}$ or $\rho (0)\supseteq\I_m^{\rm II}$ if $k=n$, and $\rho (0)\supseteq\I_{k-1}$ if $0\leq k\leq m$, where we agree that $\I_{-1} = \emptyset$.
\end{lemma}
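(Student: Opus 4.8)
The plan is to handle the two ranges $0\le k\le m$ and $k=n$ separately, and in each to fabricate from the relation $\sigma\equiv_\rho\tau$ a pair of $\rho$-related elements that fail to be $\H$-related, so that Lemma~\ref{noidealh} forces one of them into the ideal class $\rho(0)$. Recall $\rho(0)$ is an ideal, as observed just before the statement. Once a single element of rank $r<m$ is known to lie in $\rho(0)$, Proposition~\ref{principal}~iii) shows its two-sided principal ideal equals $\I_r$, so $\I_r\subseteq\rho(0)$; likewise a single rank-$m$ element of type $t$ in $\rho(0)$ gives $\I_m^{t}\subseteq\rho(0)$. This rank-dropping mechanism is the engine of the whole argument.

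First I would treat $0\le k\le m$; the cases $k=0,1$ are trivial, since then $\I_{k-1}$ is $\emptyset$ or $\{0\}$, both contained in $\rho(0)$, so assume $2\le k\le m$. By Lemma~\ref{noidealh} either $\sigma\in\rho(0)$ or $\sigma\H\tau$. If $\sigma\in\rho(0)$, then $\oor\sigma\oor\subseteq\rho(0)$, and $\oor\sigma\oor$ equals $\I_k$ (if $k<m$) or $\I_m^{\tp(\sigma)}$ (if $k=m$), each of which contains $\I_{k-1}$; done. If instead $\sigma\H\tau$, then $\sigma$ and $\tau$ share a domain $\{a_1,\dots,a_k\}$ and an image $\{c_1,\dots,c_k\}$ with $\sigma(a_i)=c_i$ and $\tau(a_i)=c_{\mu(i)}$ for some $\mu\in{S}_k$, $\mu\ne1$. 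I would pick $r$ with $\mu(r)\ne r$ and set $B=\{c_1,\dots,c_k\}\setminus\{c_r\}$; then $\varepsilon_B\sigma$ and $\varepsilon_B\tau$ are rank-$(k-1)$ elements of $\oor$ whose domains omit $a_r$ and $a_{\mu^{-1}(r)}$ respectively. As $r\ne\mu^{-1}(r)$ these domains differ, so by Proposition~\ref{green}~iii) the two elements are not $\H$-related; since $\varepsilon_B\sigma\equiv_\rho\varepsilon_B\tau$, Lemma~\ref{noidealh} places $\varepsilon_B\sigma$ in $\rho(0)$, and therefore $\I_{k-1}=\oor(\varepsilon_B\sigma)\oor\subseteq\rho(0)$.

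For $k=n$ we have $\sigma\in\w'$. If $\rk(\tau)<n$ then Lemma~\ref{congideal2} already yields $\rho(0)=\rho(\sigma)\supseteq\I_n=\oor$, so I may assume $\tau\in\w'$ as well. Left-multiplying $\sigma\equiv_\rho\tau$ by $\sigma^{-1}$ gives $1\equiv_\rho w$ with $w=\sigma^{-1}\tau\in\w'\setminus\{1\}$. The combinatorial crux is to produce an admissible $m$-subset $A$ with $w(A)\ne A$: choosing $j$ with $w(j)\ne j$, one builds an admissible $m$-set $A$ containing $j$ but not $w(j)$ (automatic when $w(j)=\theta(j)$, since admissibility forces $\theta(j)\notin A$; otherwise include $\theta(w(j))$ in place of $w(j)$), so that $w(j)\in w(A)\setminus A$ and hence $w(A)\ne A$. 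Then $\varepsilon_A\equiv_\rho\varepsilon_A w$ are two rank-$m$ members of $\oor$ with common image $A$ but distinct domains $A$ and $w^{-1}(A)$, hence not $\H$-related by Proposition~\ref{green}~iii); Lemma~\ref{noidealh} gives $\varepsilon_A\in\rho(0)$, so $\I_m^{\tp(A)}=\oor\varepsilon_A\oor\subseteq\rho(0)$, which is one of $\I_m^{\rm I},\I_m^{\rm II}$.

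The step I expect to be the main obstacle is precisely this case $k=n$: because $\tau$ may also have full rank, Lemma~\ref{congideal2} does not apply directly, and the rank drop must be engineered by hand by passing to $1\equiv_\rho w$ and striking with an idempotent on a non-$w$-invariant admissible $m$-set. The remaining verifications — that $\varepsilon_B\sigma$ and $\varepsilon_A w$ genuinely lie in $\oor$, i.e.\ that the new domains and images are admissible and, at rank $m$, of matching type — are routine, following from closure of admissibility under subsets and from the fact that $w\in\w'$ preserves both admissibility and type.
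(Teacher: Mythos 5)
Your proof is correct and follows essentially the same strategy as the paper's: use Lemma \ref{noidealh} to reduce to $\H$-related pairs, then multiply by a suitable idempotent $\varepsilon_B$ (resp.\ $\varepsilon_A$) to produce $\rho$-related elements of lower rank with distinct domains, forcing one into the ideal $\rho(0)$. The only deviations are cosmetic: you permute images rather than domains, and in the $k=n$ case you normalize to $1\equiv_\rho w$ and build the non-$w$-invariant admissible $m$-set by a slightly more uniform pair-selection argument than the paper's case split on whether $\mu$ preserves $\{1,\dots,m\}$.
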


\begin{proof}
If $\sigma\in\rho (0)$, it is straightforward. Now suppose $\sigma\notin\rho (0)$. We prove the desired result case by case.

Case 1: $0\le k\leq m$. It is trivial for $k=0, 1$. Assume that $2\le k\le m$. By Lemma \ref{noidealh} we can write
$$\sigma=\left (
\begin{array}{cccc}
  a_1 & a_2 & \dots & a_k \\
  b_1 & b_2 & \dots & b_k
\end{array}\right)\quad\text{and}\quad\tau=\left (
\begin{array}{cccc}
  a_{\mu (1)} & a_{\mu (2)} & \dots & a_{\mu (k)} \\
  b_1 & b_2 & \dots & b_k
\end{array}\right)
$$
where $\mu\in{S}_k$. Because $\tau\neq\sigma$, we find $\mu$ is not the identity element. Choose $t$ such that $\mu (t)\neq t$ and let
$A=\im (\sigma)\backslash\{b_t\}$. Then
$\varepsilon_A\sigma\equiv_\rho\varepsilon_A\tau$. Since
$a_t\notin\dom (\varepsilon_A\sigma)$ and
$a_t\in\dom (\varepsilon_A\tau)$, we have
$\dom (\varepsilon_A\sigma)\neq\dom (\varepsilon_A\tau)$. From Lemma
\ref{noidealh} it follows immediately that $\varepsilon_A\sigma\in\rho (0)$.
But then $\rk (\varepsilon_A\sigma)=k-1$ forces $\rho (0)\supseteq
\I_{k-1}$.

 Case 2: $k=n$. Since $\tau\neq\sigma$, we can assume from Lemma \ref{noidealh} that
    $$\sigma=\left (
\begin{array}{cccc}
  a_1 & a_2 & \dots & a_n \\
  1 & 2 & \dots & n
\end{array}\right)\qaq\tau=\left (
\begin{array}{cccc}
  a_{\mu (1)} & a_{\mu (2)} & \dots & a_{\mu (n)} \\
  1 & 2 & \dots & n
\end{array}\right)
$$
where $\mu\in\w^\p$ is different from the identity.

We claim that if $\mu\in\w^\p$ is not the identity element, there is an admissible $m$-subset $I=\{i_1,i_2,\dots,i_m\}$ of $\n$
such that $I\neq\mu (I)=\{\mu (i_1),\mu (i_2),\dots,\mu (i_m)\}$. Indeed, if
$\{1,2,\dots,m\}\neq\{\mu (1),\mu (2),\dots,\mu (m)\}$, then the set
$I=\{1,2,\dots,m\}$, whose type is ${\rm I}$, is the desired admissible $m$-subset in the claim.
If $\{1,2,\dots,m\}=\{\mu (1),\mu (2),\dots,\mu (m)\}$, since $\mu$ is not the identity, there exists $p$ with $1\leq p\leq m$ such that $\mu (p)\neq p$. Consider the
admissible $m$-subset $I=\{1,\dots,p-1,\bar{p},p+1,\dots,m\}$ of type ${\rm II}$. We have $\overline{\mu (p)}=\mu (\bar{p})\in\mu (I)$
and $\overline{\mu (p)}\notin I$. Hence $I\neq\mu (I)$.

Now let $I=\{i_1,i_2,\dots,i_m\}$ be an admissible $m$-subset of $\n$ such that $I\neq\mu (I)$. We obtain
 $\dom (\varepsilon_I\sigma)\neq\dom (\varepsilon_I\tau)$. Hence
$\varepsilon_I\sigma\notin\H (\varepsilon_I\tau)$ by Proposition  \ref{green} iii), and hence $\varepsilon_I\sigma\in\rho (0)$ by Lemma
\ref{noidealh}. But
$\rk (\varepsilon_I\sigma)=m$; it follows that $\rho (0)\supseteq
\I_m^{\rm I}$ or $\rho (0)\supseteq \I_m^{\rm II}$.
\end{proof}

The following result is an immediate consequence of Lemma \ref{inequality}.
\begin{lemma}\label{identity}
Let $\rho$ be a congruence on $\oor$ with $\rho (0)=\I_k$ for some
$k=0,1,\dots,m-1$, and let $\sigma\in \oor$ with $\rk(\sigma)>k+1$. If $\tau\in \oor$, then $\tau\equiv_\rho\sigma$ if and only if
$\tau=\sigma$.
\end{lemma}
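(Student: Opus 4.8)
The plan is to prove the statement by contradiction, relying entirely on Lemma \ref{inequality} together with the fact that $\rho(0)$ is pinned down to be exactly $\I_k$. The reverse implication is immediate: if $\tau=\sigma$ then $\tau\equiv_\rho\sigma$ by reflexivity. So I would concentrate on the forward direction.

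For the forward direction, suppose toward a contradiction that there is some $\tau\in\oor$ with $\tau\equiv_\rho\sigma$ and $\tau\neq\sigma$. Write $j=\rk(\sigma)$; by hypothesis $j>k+1$, so $j\geq k+2$, and since $\rk$ takes values in $\{0,1,\dots,m,n\}$, either $k+2\leq j\leq m$ or $j=n$. The existence of a $\tau\neq\sigma$ with $\tau\equiv_\rho\sigma$ places us exactly in the hypotheses of Lemma \ref{inequality}, which I would apply to $\sigma$ in each of the two cases.

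In the case $k+2\leq j\leq m$, Lemma \ref{inequality} yields $\rho(0)\supseteq\I_{j-1}$. Since $j-1\geq k+1$, this gives $\rho(0)\supseteq\I_{k+1}$. But $\rho(0)=\I_k$, and $\I_{k+1}$ properly contains $\I_k$ because $k+1\leq m$ guarantees the existence of admissible $(k+1)$-subsets, hence of elements of rank $k+1$ not lying in $\I_k$; this is the desired contradiction. In the case $j=n$, Lemma \ref{inequality} yields $\rho(0)\supseteq\I_m^{\rm I}$ or $\rho(0)\supseteq\I_m^{\rm II}$. Either of these ideals contains elements of rank $m>k$ and therefore strictly contains $\I_k=\rho(0)$, again a contradiction. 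In both cases no such $\tau$ can exist, so $\tau\equiv_\rho\sigma$ forces $\tau=\sigma$.

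There is no serious obstacle here; the argument is a direct bookkeeping comparison of ideals in the chain $\I_0\subseteq\I_1\subseteq\dots\subseteq\I_{m-1}\subseteq\I_m^*\subseteq\I_n$. The only point requiring a moment's care is to confirm that the inclusions are strict, that is, that the ideal produced by Lemma \ref{inequality} genuinely exceeds $\I_k$. This rests solely on the existence of elements of the relevant ranks, which is guaranteed because every rank up to $m$ is realized by admissible subsets, rather than on any deeper structural property of $\rho$.
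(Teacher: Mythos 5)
Your argument is correct and is exactly what the paper intends: it states Lemma \ref{identity} as an immediate consequence of Lemma \ref{inequality}, and your case analysis (applying that lemma to $\sigma$ and comparing the resulting ideal with $\rho(0)=\I_k$ to get a contradiction) is the straightforward way to make that deduction explicit. The check that the inclusions are strict is the right small detail to verify, and it holds for the reason you give.
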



We introduce two congruences on the orthogonal rook monoid ${\cal OR}_4$. Note that the unit
group of ${\cal OR}_4$ is $$\w^\p=\left\{\epsilon,~\delta_1= \left (
\begin{array}{cccc}
  1 & 2 & 3 & 4 \\
  2 & 1 & 4 & 3
\end{array}\right),
~
\delta_2=\left (
\begin{array}{cccc}
  1 & 2 & 3 & 4 \\
  3 & 4 & 1 & 2
\end{array}\right),~
\delta_1\delta_2 \right\}~$$
where $\epsilon$ is the identity element of $\w'$.

The first congruence $\equiv_1$ is defined by declaring its equivalency classes:
$\{\epsilon,\delta_1\}$ and $\{\delta_2,\delta_1\delta_2\}$ are two $\equiv_1$ equivalent
classes contained in $\w^\p$, the ideal $\I_2^{\rm II}$ is one
$\equiv_1$ class, and if $\sigma$ is an element of type I of rank 2,
then $\sigma\equiv_1\tau$ if and only if $\sigma\H\tau$.

The second congruence $\equiv_2$ is defined as follows: $\{\epsilon,\delta_2\}$
and $\{\delta_1,\delta_1\delta_2\}$ are two $\equiv_2$ equivalent classes contained in $\w^\p$, the
ideal $\I_2^{\rm I}$ is an $\equiv_2$ class, and if $\sigma$ is an
element of type II of rank 2, then $\sigma\equiv_2\tau$ if and only
if $\sigma\H\tau$.

It is routine to check directly that $\equiv_1$ and
$\equiv_2$ are both congruences on ${\cal OR}_4$.

\begin{lemma}\label{congonw'}
Let $\rho$ be a congruence on $\oor$ such that $\rho (0)=\I_m^{\rm
I}$ or $\rho (0)=\I_m^{\rm II}$. Let $\sigma, \tau\in\oor$ and $\rk(\sigma)=n$.

{\rm i)} If $m\ne 2$, then $\sigma\equiv_\rho\tau$ if and only if
$\sigma=\tau$.

{\rm ii)} If $m = 2$ and there exists $\tau\ne\sigma$ such that
$\tau\equiv_\rho\sigma$, then $\rho$ is $\equiv_1$ if $\rho (0)=\I_m^{\rm II}$, and $\rho$ is $\equiv_2$ if $\rho (0)=\I_m^{\rm I}$.
\end{lemma}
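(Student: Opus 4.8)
The plan is to exploit that an element $\sigma$ of rank $n$ lies in $\w'$ and cannot lie in $\rho(0)=\I_m^{\rm I}$ or $\I_m^{\rm II}$ (these consist of elements of rank $\le m<n$). Hence by Lemma \ref{noidealh} any $\tau\equiv_\rho\sigma$ satisfies $\sigma\H\tau$, so $\tau\in\w'$ as well. Since $\w'$ is a group, I would multiply by $\sigma^{-1}$ to reduce both statements to the single question: for which $\mu\in\w'$ can one have $\epsilon\equiv_\rho\mu$? The whole argument is then driven by left-multiplying such a relation by idempotents and feeding the result into Lemma \ref{noidealh}.

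The key mechanism is this. Given $\epsilon\equiv_\rho\mu$ and an admissible $m$-subset $I$, multiply on the left by $\varepsilon_I$ to get $\varepsilon_I\equiv_\rho\varepsilon_I\mu$; since composition applies $\mu$ first, $\varepsilon_I\mu$ has image $I$ and domain $\mu^{-1}(I)$, and it has the same type as $I$ because elements of $\w'$ preserve type. Now choose $I$ to be of the type \emph{not} contained in $\rho(0)$ (type $\mathrm I$ when $\rho(0)=\I_m^{\rm II}$, type $\mathrm{II}$ when $\rho(0)=\I_m^{\rm I}$). Then $\varepsilon_I\notin\rho(0)$, so Lemma \ref{noidealh} forces $\varepsilon_I\H\varepsilon_I\mu$, hence $\mu^{-1}(I)=I$, i.e. $\mu(I)=I$. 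Thus a nonidentity $\mu$ with $\epsilon\equiv_\rho\mu$ must fix \emph{setwise every} admissible $m$-subset of the relevant type.

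The combinatorial core, which I expect to be the main obstacle, is to show that for $m\ne 2$ the identity is the only element of $\w'$ fixing every admissible $m$-subset of a fixed type; this yields part i). For $S\subseteq\{1,\dots,m\}$ write $T_S=(\{1,\dots,m\}\setminus S)\cup\{\bar s:s\in S\}$, so the type-$\mathrm I$ (resp. type-$\mathrm{II}$) transversals are exactly the $T_S$ with $|S|$ even (resp. odd). I would verify that for a fixed $i\le m$ the intersection of all $T_S$ with $i\notin S$ and $|S|$ of the prescribed parity equals $\{i\}$ precisely when $m\ge 3$: every $j\le m$, $j\ne i$, is excluded by some admissible $S$ containing it, $\bar j$ is excluded by some $S$ not containing $j$, and $\bar i$ never occurs; all of this requires at least three indices $\le m$. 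Since $\mu$ fixes each $T_S$, it fixes this intersection, so $\mu(i)=i$ for all $i\le m$, whence $\mu(\bar i)=\overline{\mu(i)}=\bar i$ and $\mu=\epsilon$ (the case $m=1$ is trivial as $\w'=\{\epsilon\}$). The delicate point is recognizing that this isolation of $\{i\}$ fails exactly at $m=2$, which is what creates the exceptional congruences.

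For part ii), $\w'=\{\epsilon,\delta_1,\delta_2,\delta_1\delta_2\}$, and a direct computation shows $\delta_1$ fixes each of the type-$\mathrm I$ sets $\{1,2\},\{3,4\}$, $\delta_2$ fixes each of the type-$\mathrm{II}$ sets $\{1,3\},\{2,4\}$, and no other nonidentity element fixes a whole type. So the mechanism above forces $\mu=\delta_1$ when $\rho(0)=\I_2^{\rm II}$ and $\mu=\delta_2$ when $\rho(0)=\I_2^{\rm I}$. Taking $\rho(0)=\I_2^{\rm II}$, from $\epsilon\equiv_\rho\delta_1$ I would left-multiply by $\delta_2$ to obtain $\delta_2\equiv_\rho\delta_1\delta_2$, giving the $\w'$-classes $\{\epsilon,\delta_1\}$ and $\{\delta_2,\delta_1\delta_2\}$; and for any type-$\mathrm I$ transversal $I$, $\varepsilon_I\delta_1$ is the nontrivial element of the $\H$-class of $\varepsilon_I$ (which is $\cong S_2$ by Proposition \ref{hclassgroup}), so left-multiplying $\varepsilon_I\equiv_\rho\varepsilon_I\delta_1$ by an arbitrary type-$\mathrm I$ rank-$2$ element shows $\rho$ collapses every type-$\mathrm I$ rank-$2$ $\H$-class. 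These identifications are precisely those defining $\equiv_1$; for the reverse inclusion I would check, again via the $\H$-test of Lemma \ref{noidealh}, that $\epsilon\not\equiv_\rho\delta_2$ and that no further pairs are merged, concluding $\rho=\equiv_1$. The case $\rho(0)=\I_2^{\rm I}$ gives $\rho=\equiv_2$ by the symmetric argument.
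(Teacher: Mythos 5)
Your proposal is correct and follows essentially the same route as the paper: the same idempotent-multiplication mechanism fed into Lemma \ref{noidealh} to force $\mu(I)=I$ for every admissible $m$-subset $I$ of the type not absorbed by $\rho(0)$, the same combinatorial observation that for $m\ne 2$ only the identity of $\w'$ fixes all such sets (the paper pins down $\sigma(s)=\tau(s)$ with three explicit sets $I,J,K$ rather than your intersection-of-transversals formulation, but it is the same fact), and the same identification of $\delta_1$, resp. $\delta_2$, as the unique exceptional element when $m=2$. Your reduction to the single question of when $\epsilon\equiv_\rho\mu$ is a clean repackaging, not a different method, and the remaining verifications you defer (rank separation, the reverse inclusion $\rho\subseteq\;\equiv_1$) all follow from Lemmas \ref{congideal2} and \ref{noidealh} exactly as in the paper.
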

\begin{proof}
Suppose $\sigma\equiv_\rho\tau$ with $\rk (\sigma)=n$. If
$\rk (\tau)<n$, from Lemma \ref{congideal2} it follows that
$\I_n\subseteq\rho(0)$, and hence $\I_n=\rho(0)$, which contradicts $\rho (0)=\I_m^{\rm I}$ or
$\rho (0)=\I_m^{\rm II}$.
Hence
$\rk (\tau)=n$.

  We now prove part i). If $m=1$, then $n=2$ and $\w'=\{1\}$, so the identity is the only element of rank $n$; it is obvious that the statement i) holds.
  
  Now suppose $m\geq 3$.
 If $\rho (0) \subseteq\I_m^{\rm II}$, then every element of type
$\rm I$ in $\oor$ of rank $m$ does not belong to $\rho (0)$. Let $s$
be a number such that $1\leq s\leq n$ and let $\epsilon_I$ be an
idempotent element of $\oor$ where $I=\{a_1,\dots,a_{m-1},\bar{s}\}$
being of type $\rm I$. We have $\sigma\gamma\equiv_\rho\tau\gamma$
and they are both of type $\rm I$. Hence
$\sigma\gamma\H\tau\gamma$ by Lemma \ref{noidealh}. Thus $\sigma
(I) =\tau (I)$. Replacing $\epsilon_I$ by
$\epsilon_J$, where $J=\{a_1,\dots,a_{m-2},\overline{a_{m-1}},s\}$,
we get $\sigma (J) =\tau (J) $.
It follows that
\begin{equation}\label{first}
\{\sigma (\overline{a_{m-1}}) ,\sigma (s) \}=
\{\tau (\overline{a_{m-1}}) ,\tau (s) \}~.
\end{equation}
Similarly, replacing $\epsilon_I$ by $\epsilon_K$, where
$K=\{a_1,\dots,\overline{a_{m-2}},a_{m-1},s\}$, we get $\sigma (K) =\tau (K) $. We find
\begin{equation}\label{second}
\{\sigma (\overline{a_{m-2}}) ,\sigma (s) \}=
\{\tau (\overline{a_{m-2}}) ,\tau (s) \}~.
\end{equation}
It follows from (\ref{first}) and (\ref{second}) that
$\sigma (s) =\tau (s) $, and hence $\sigma=\tau$, since $s$ is arbitrary.
If $\rho (0)=\I_m^{\rm II}$, the proof is similar.

  We next prove part ii). Since only elements of rank 4 are $\rho$ equivalent to elements of rank 4, the restriction
of $\rho$ to $\w^\p$ induces a congruence on $\w^\p$. As $\w^\p$ is
a group, there exists $\eta\in{\cal OR}_4$ such that $\eta\ne\epsilon$
but $\eta\equiv_\rho\epsilon$. Then
$\eta\theta\equiv_\rho\epsilon\theta=\theta$ where $\theta= \left
(\begin{array}{cc}
  1 & 2 \\
  2 & 1
\end{array}\right)$. We obtain
 $\eta\theta\H\theta$ by Lemma \ref{noidealh}. It follows immediately that
$\im (\eta\theta) =\im(\theta)=\{1,2\}$, and thus $\{\eta (1) ,\eta
(2)\}=\{1,2\}$. If $\eta(1)=1$ and $\eta(2)=2$, we would get
$\eta=\epsilon$, which is absurd. Therefore,
$$\eta =\left
(\begin{array}{cccc}
         1 & 2 & 3 & 4 \\
         2 & 1 & 4 & 3
       \end{array}\right) =\delta_1~,
$$
and hence $\epsilon\equiv_\rho\delta_1$ and $\delta_2\equiv_\rho\delta_1\delta_2$.

We claim that for any two elements $\mu,\nu$ of rank 2 with type I if $\mu\H\nu$ then $\mu\equiv_\rho\nu$. Notice that the set of all elements of rank 2 with type I is

\vspace{2mm}
{\footnotesize
$\bigg\{\left(\begin{array}{cc}
         1 & 2  \\
         1 & 2
       \end{array}\right), \left
(\begin{array}{cc}
         1 & 2  \\
         2 & 1
       \end{array}\right)$,
  $\left
(\begin{array}{cc}
         1 & 2  \\
         3 & 4
       \end{array}\right), \left
(\begin{array}{cc}
         1 & 2  \\
         4 & 3
       \end{array}\right)$,
   $\left
(\begin{array}{cc}
         3 & 4  \\
         1 & 2
       \end{array}\right), \left
(\begin{array}{cc}
         3 & 4  \\
         2 & 1
       \end{array}\right)$,
  $\left
(\begin{array}{cc}
         3 & 4  \\
         3 & 4
       \end{array}\right), \left
(\begin{array}{cc}
         3 & 4  \\
         4 & 3
       \end{array}\right)\bigg\}$\\
}\\
We know $\epsilon\alpha\equiv_\rho\delta_1\alpha$ for any element $\alpha$ of rank $2$ with type I
since $\epsilon\equiv_\rho\delta_1$. A direct calculation yields the following table:
\begin{center}

\begin{tabular}{|c|c|}
  \hline

   $\alpha$ & $\epsilon\alpha\equiv_\rho\delta_1\alpha$ \\
  \hline
    $\left
(\begin{array}{cc}
         1 & 2  \\
         1 & 2
       \end{array}\right)$ & $\left
(\begin{array}{cc}
         1 & 2  \\
         1 & 2
       \end{array}\right)\equiv_\rho\left
(\begin{array}{cc}
         1 & 2  \\
         2 & 1
       \end{array}\right)$ \\[8pt]
  $\left
(\begin{array}{cc}
         1 & 2  \\
         3 & 4
       \end{array}\right)$ & $\left
(\begin{array}{cc}
         1 & 2  \\
         3 & 4
       \end{array}\right)\equiv_\rho\left
(\begin{array}{cc}
         1 & 2  \\
         4 & 3
       \end{array}\right)$ \\[8pt]
   $\left
(\begin{array}{cc}
         3 & 4  \\
         1 & 2
       \end{array}\right)$ & $\left
(\begin{array}{cc}
         3 & 4  \\
         1 & 2
       \end{array}\right)\equiv_\rho\left
(\begin{array}{cc}
         3 & 4  \\
         2 & 1
       \end{array}\right)$ \\[8pt]
  $\left
(\begin{array}{cc}
         3 & 4  \\
         3 & 4
       \end{array}\right)$ & $\left
(\begin{array}{cc}
         3 & 4  \\
         3 & 4
       \end{array}\right)\equiv_\rho\left
(\begin{array}{cc}
         3 & 4  \\
         4 & 3
       \end{array}\right)$ \\[8pt]
  \hline
\end{tabular}~.
\end{center}
The claim reads from this table. The given condition that $\rho(0)=\I_m^{\rm II}$ implies $\mu\notin\rho(0)$ where $\mu$ is any element of rank 2 with type I. But Lemma \ref{noidealh} tells us that if $\mu\equiv_\rho\nu$ then $\mu\H\nu$. Hence $\mu\equiv_\rho\nu$ if and only if $\mu\H\nu$. That is, $\rho$ coincides with $\equiv_1$.
\end{proof}

We are now in a position to prove our main result Theorem 1.1, which can be read from the following theorem.
\begin{theorem}\label{finial}
Let $\rho$ be a nonuniform congruence on $\oor$.

{\rm i)} If $\rho (0) =\I_m^{\rm II}$ {\rm (resp. $\rho (0)
=\I_m^{\rm I}$)}  and $n\ne 4$, then $\rho$ coincides with
$\equiv_N^{\rm I}$ {\rm(resp. $\equiv_N^{\rm II}$)}  for some
$N\unlhd S_m$.

{\rm ii)} If $\rho (0) =\I_m^{\rm II}$ {\rm (resp. $\rho (0)
=\I_m^{\rm I}$)}  and $n = 4$, then $\rho$ coincides with $\equiv_1$
{\rm(resp. $\equiv_2$)} or  $\equiv_N^{\rm I}$ {\rm (resp.
$\equiv_N^{\rm II}$)}  for some $N\unlhd S_m$.


{\rm iii)} If $\rho (0) =\I_{m-1}$,  then $\rho$ coincides with
$\equiv_{N_1,N_2}$ for some $N_1,N_2\unlhd S_m$.

{\rm  iv)} If $\rho (0) =\I_{k-1}$ for some $k=1,2,\dots,m-1$, then
$\rho$ coincides with $\equiv_N$ for some normal subgroup $N$ of
$S_k$.
\end{theorem}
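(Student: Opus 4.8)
The plan is to fix the nonuniform congruence $\rho$ and first pin down its zero class. By Lemmas \ref{congideal} and \ref{congideal2} the class $\rho(0)$ is an ideal, and by Corollary \ref{ideal} together with the nonuniform hypothesis (which excludes $\rho(0)=\oor$) it must be one of $\I_0,\dots,\I_{m-1},\I_m^{\rm I},\I_m^{\rm II}$; these are exactly the possibilities i)--iv) of the statement. For each choice I would read off $\rho$ rank by rank. The critical rank $k$ is $k\le m-1$ when $\rho(0)=\I_{k-1}$, and $k=m$ when $\rho(0)\in\{\I_{m-1},\I_m^{\rm I},\I_m^{\rm II}\}$. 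Everything of rank below $k$ lies in $\rho(0)$ by definition; everything strictly above $k$ is a singleton class, by Lemma \ref{identity} when $\rho(0)=\I_{k-1}$, by Lemma \ref{inequality} applied to the unit group when $\rho(0)=\I_{m-1}$, and by Lemma \ref{congonw'} when $\rho(0)=\I_m^{\rm I}$ or $\I_m^{\rm II}$ (for $m\neq2$). Thus the whole content of $\rho$ is concentrated at the single critical rank, and the problem reduces to describing the restriction of $\rho$ there.

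At the critical rank the governing fact is Lemma \ref{noidealh}: two $\rho$-equivalent elements outside $\rho(0)$ are automatically $\H$-related, so $\rho$ only refines the partition of the critical-rank elements into $\H$-classes, and I would analyse one $\H$-class at a time. In each $\D$-class of rank $k$ choose the idempotent $\varepsilon_A$; by Proposition \ref{hclassgroup} its $\H$-class $H_A$ is a copy of $S_k$, and the restriction of $\rho$ to this maximal subgroup is a group congruence, hence equals a normal subgroup $N_A\unlhd S_k$. For a non-idempotent $\H$-class $H$ of the same $\D$-class I would transport the picture to $H_A$ using rank-$k$ elements $\alpha$ with domain $A$ and image $\im(\sigma)$, and $\beta$ with domain $\dom(\sigma)$ and image $A$ (for any fixed $\sigma\in H$): then $\xi\mapsto\alpha\xi\beta$ is a bijection $H_A\to H$, and since $\rho$ is a congruence it carries $\rho|_{H_A}$ onto $\rho|_H$. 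Reading this back through the identification (\ref{hclass}) shows that $\sigma\equiv_\rho\tau$ on $H$ exactly when $\tau=\mu(\sigma)$ for some $\mu\in N_A$, which is the critical-rank clause in the definition of $\equiv_N$, $\equiv_{N_1,N_2}$, or $\equiv_N^{\rm I/II}$.

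The crux, and the step I expect to cost the most, is to show that a \emph{single} normal subgroup governs every $\H$-class of a $\D$-class, i.e. that $N_A$ is independent of $A$. Here I would use that $\w'$ acts transitively on the admissible $k$-subsets when $k<m$, and transitively on each of the two types of admissible $m$-subsets when $k=m$ (which is how the types were defined). Given $w\in\w'$ with $w(A)=A'$, conjugation $\xi\mapsto w\xi w^{-1}$ sends $\varepsilon_A$ to $\varepsilon_{A'}$ and is a group isomorphism $H_A\to H_{A'}$; since $\rho$ is preserved under conjugation it carries $N_A$ onto $N_{A'}$ through an automorphism of $S_k$, and because every normal subgroup of $S_k$ is characteristic we conclude $N_{A'}=N_A$. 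When $k<m$ transitivity yields a single common $N$, so $\rho=\equiv_N$ (case iv). When $\rho(0)=\I_{m-1}$ both types survive at rank $m$, giving one subgroup $N_1$ for type $\rm I$ and one $N_2$ for type $\rm II$, so $\rho=\equiv_{N_1,N_2}$ (case iii). When $\rho(0)=\I_m^{\rm I}$ or $\I_m^{\rm II}$ only one type survives, producing a single $N$, so $\rho=\equiv_N^{\rm II}$ or $\equiv_N^{\rm I}$ (case i).

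Finally the behaviour of the unit group $\w'$ is the genuinely dimension-dependent part. For $m\neq2$, Lemma \ref{congonw'}(i) guarantees that when $\rho(0)=\I_m^{\rm I}$ or $\I_m^{\rm II}$ every unit is a singleton class, so no further congruences arise and case i is complete. For $m=2$ (that is, $n=4$) the unit group is small enough that $\rho$ may fuse distinct units; Lemma \ref{congonw'}(ii) then forces $\rho$ to be $\equiv_1$ (if $\rho(0)=\I_m^{\rm II}$) or $\equiv_2$ (if $\rho(0)=\I_m^{\rm I}$), which is precisely the extra alternative of case ii. The points demanding the most care are this exceptional $m=2$ bookkeeping on $\w'$ and, above all, verifying in the general case that the transported partitions match the critical-rank clause of each definition \emph{exactly}, rather than merely being refined by a possibly $\H$-class-dependent subgroup; the transitivity-plus-characteristic argument of the previous paragraph is exactly what removes that ambiguity.
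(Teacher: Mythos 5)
Your proposal is correct and follows essentially the same route as the paper: identify $\rho(0)$, use Lemmas \ref{congideal2}, \ref{inequality}, \ref{identity} and \ref{congonw'} to make everything above the critical rank a singleton, invoke Lemma \ref{noidealh} at the critical rank, extract a normal subgroup $N$ of $S_k$ from a group $\H$-class via Proposition \ref{hclassgroup}, and spread it to the other $\H$-classes by sandwich multiplication, with the $n=4$ exception delegated to Lemma \ref{congonw'}(ii). The only difference is cosmetic: the paper transports every $\H$-class of the critical $\J$-class to a single fixed idempotent's $\H$-class by an explicit two-sided sandwich (which tracks the permutation $\mu$ directly), so your extra $\w'$-conjugation/characteristic-subgroup step for reconciling the various $N_A$ is not needed, though it is valid.
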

\begin{proof}

We first show part i). As $n\ne 4$, it follows from Lemma \ref{congonw'} i) that if
$\sigma\in\oor$ with $\rk(\sigma)=n$ then $\{\sigma\}$ is an
equivalence class. Since $\rho (0) =\I_m^{\rm II}$, all the elements of rank less than or equal to $m-1$ and
those of rank $m$ with type ${\rm II}$ form a single class $\rho
(0)$. We now determine the equivalence classes consisting of
elements of rank $m$ with type ${\rm I}$. Taking
$\sigma,\tau\in\oor$ with $\rk(\sigma) = \rk(\tau)=m$ and
$\tp(\sigma) = \tp(\tau)={\rm I}$, we find from Lemma \ref{noidealh}
that if
 $\sigma\equiv_\rho\tau$ then $\sigma\H\tau$.

We claim that if $\epsilon\in\oor$ with $\epsilon^2=\epsilon$ and
$\rk(\epsilon) = m,\tp(\epsilon) = {\rm I}$, then $\H (\epsilon)
\cong{S}_m$ as groups. Indeed, from Proposition \ref{green} iii) it
follows that $\delta\in\H(\epsilon)$ if and only if $\delta$ and
$\epsilon$ have the same domain, which is equal to same range. Using
$\{c_1, c_2, \ldots, c_m\}$ to denote the domain of $\epsilon$, we find that
$\delta\in\H (\epsilon)$ if and only if
$$
\delta=\left (
\begin{array}{cccc}
  c_1 & c_2 & \dots & c_m \\
  c_{\mu (1) } & c_{\mu (2) } & \dots & c_{\mu (m) }
\end{array}\right),
$$
where $\mu$ is a permutation of $\{1, 2, \ldots, m\}$. Then the
mapping $\delta\mapsto\mu$ yields an isomorphism $\H (\epsilon)
\cong{S}_m$.

It is easily seen that $G (\epsilon) =\{\delta\in\H (\epsilon)
\mid\delta\equiv_\rho\epsilon\}$ is a normal subgroup of $\H
(\epsilon)$, so the image $N$ of $G (\epsilon)$ under the mapping above is a normal subgroup of ${S}_m$.

Let $\sigma, \tau\in\oor$ with $\rk(\sigma) = \rk(\tau) =
m,\tp(\sigma) = \tp(\tau) ={\rm I}$ and $\sigma\mathcal{H}\tau$. Let
\begin{equation*}
\sigma=\left(
\begin{array}{cccc}
  a_1 & a_2 & \dots & a_m \\
  b_1 & b_2 & \dots & b_m
\end{array}\right)\quad\text{and}\quad
\tau=\left(
\begin{array}{cccc}
  a_1 & a_2 & \dots & a_m \\
  b_{\mu(1)} & b_{\mu(2)} & \dots & b_{\mu(m)}
\end{array}\right),
\end{equation*}
where $\mu\in{S}_m$. Define four elements of $\oor$:
$$ \alpha=\left (
\begin{array}{cccc}
  c_1 & c_2 & \dots & c_m \\
  a_1 & a_2 & \dots & a_m
\end{array}\right)\quad\text{and}\quad\beta=\left (
\begin{array}{cccc}
  b_1 & b_2 & \dots & b_m \\
  c_1 & c_2 & \dots & c_m
\end{array}\right) ,$$

\begin{equation}\label{simplecase}
\alpha^\p=\left (
\begin{array}{cccc}
  a_1 & a_2 & \dots & a_m \\
  c_1 & c_2 & \dots & c_m
\end{array}\right) \quad\text{and}\quad\beta^\p=\left (
\begin{array}{cccc}
  c_1 & c_2 & \dots & c_m \\
  b_1 & b_2 & \dots & b_m
\end{array}\right) .
\end{equation}
We obtain $\sigma=\beta^\p\epsilon\alpha^\p,
\,\tau=\beta^\p\delta\alpha^\p, \,
\epsilon=\beta\sigma\alpha,\,\delta=\beta\tau\alpha$. Hence
$\sigma\equiv_\rho\tau$ if and only if $\epsilon\equiv_\rho\delta$
if and only if $\mu\in N$. Thus $\rho$ coincides with $\equiv_N^{\rm
I}$.

We next prove part ii). Note that $n = 4$ and $\rho (0) =\I_m^{\rm II}$. If every equivalent class containing an element of rank $4$ consists of only one element, a similar proof to part i) yields
that $\rho$ coincides with $\equiv^{\rm I}_N$ for some normal
subgroup $N$ of $S_m$. If there exists an equivalent class determined by an element of rank $4$ that contains more than one element, by Lemma \ref{congonw'} ii) we get
$\rho$ coincides with $\equiv_1$. For the case $\rho (0) =\I_m^{\rm I}$ the proof is similar.

In a similar manner, we can prove iii) and iv), and we leave the details to the interested reader.
\end{proof}

\section{Congruences on the symplectic rook monoids $\sr$}\label{conSymplectic}

Note that two elements $\sigma$ and $\tau$ of $\sr$ lie in the same $\H$-class if and only if there exists some $k=1,\dots,m,n$ such that
\begin{equation}\label{hclass}
\sigma=\left(
\begin{array}{cccc}
  a_1 & a_2 & \dots & a_k \\
  b_1 & b_2 & \dots & b_k
\end{array}\right)\quad\text{and}\quad
\tau=\left(
\begin{array}{cccc}
  a_1 & a_2 & \dots & a_k \\
  b_{\mu(1)} & b_{\mu(2)} & \dots & b_{\mu(k)}
\end{array}\right)
\end{equation}
where $\mu\in\w$ if $k=n$, and $\mu\in{S}_k$ if $k=1,\dots,m$.

For each normal subgroup $N$ of $\w$ or of ${S}_k$, $k=1,\dots,m$ we now define a congruence $\equiv_N$ on $\sr$. To make notation consistent, from now on we always specify $k=n$ if $N$ is a normal subgroup of $\w$.

\begin{definition}
    Let $\sigma, \tau\in\sr$.
    \begin{enumerate}[{\indent 1)}]
\vspace{-2mm}
\item If $\rk(\sigma)<k$, then $\sigma\equiv_N\tau$ if and only if
$\rk(\tau)<k$.
\vspace{-2mm}
\item
If $\rk(\sigma)=k$, then $\sigma\equiv_N\tau$ if and only if $\sigma\H\tau$ and $\sigma$ and $\tau$ are given in (\ref{hclass}) for some $\mu\in N$.
\item
\vspace{-2mm}
If $\rk(\sigma)>k$, then $\sigma\equiv_N\tau$ if and only if $\sigma=\tau$.
\end{enumerate}
\end{definition}

The relation $\equiv_N$ is a well-defined congruence on $\sr$. It is the identity congruence if and only if $k=1$.

Our main result about the classification of the congruences on the symplectic rook monoids is stated below.
\begin{theorem}\label{symConResult}
Let $\rho$ be a nonuniform congruence on the symplectic rook monoid $\sr$. Then there is a normal subgroup $N$ of $\w$ or ${S}_k$ for some $k=1,2,\dots,m$ such that $\rho$ coincides with $\equiv_N$.
\end{theorem}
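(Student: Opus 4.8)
The plan is to run the argument of Section \ref{conOrthogonal} in parallel, taking advantage of the fact that the ideal lattice of $\sr$ is a single chain instead of the branched chain of $\oor$. First I would record the Green structure of $\sr$. Since $\sr$ imposes no type condition at rank $m$, the analogue of Proposition \ref{principal} iii) collapses to the uniform statement $S\sigma S=\{\tau\in S\mid\rk(\tau)\le\rk(\sigma)\}$ for every $\sigma$, so the only ideals of $\sr$ are the $\mathcal{I}_k=\{\sigma\mid\rk(\sigma)\le k\}$ with $k\in\{0,1,\dots,m,n\}$, forming the chain $\mathcal{I}_0\subseteq\cdots\subseteq\mathcal{I}_m\subseteq\mathcal{I}_n=\sr$. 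I would likewise record the symplectic analogue of Proposition \ref{hclassgroup}: an $\H$-class of $\sr$ meeting an idempotent is isomorphic to $S_k$ when its rank is $k\in\{1,\dots,m\}$ (any permutation of an admissible $k$-subset is again symplectic) and equals $\w$ when the rank is $n$.

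Next I would transplant the four structural lemmas. By Lemma \ref{congideal}, $\rho(0)$ is an ideal, so $\rho(0)=\mathcal{I}_l$ for a unique $l$; nonuniformity forces $\rho(0)\ne\sr$, hence $l\le m$. The analogue of Lemma \ref{congideal2} (if $\sigma\equiv_\rho\tau$ with $\rk(\sigma)>\rk(\tau)$ then $\mathcal{I}_{\rk(\sigma)}\subseteq\rho(0)$) and of Lemma \ref{noidealh} (if $\sigma\equiv_\rho\tau$ and $\sigma\notin\rho(0)$ then $\sigma\H\tau$) go through verbatim, the type bookkeeping simply dropping out. The analogue of Lemma \ref{inequality} then reads: if some element of rank $k$ is nontrivially $\rho$-related, then $\rho(0)\supseteq\mathcal{I}_{k-1}$ for $k\le m$ and $\rho(0)\supseteq\mathcal{I}_m$ for $k=n$. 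Its only nonroutine ingredient is the claim that every non-identity $\mu\in\w$ moves some admissible $m$-subset, and the two-subcase argument of Case 2 of Lemma \ref{inequality} applies unchanged, since it uses only $\mu(\bar i)=\overline{\mu(i)}$, which holds for all of $\w$.

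With these in hand the classification is a case analysis on $l$, where $\rho(0)=\mathcal{I}_l$; set $k=l+1$ if $l\le m-1$ and $k=n$ if $l=m$. The inequality lemma gives at once the analogue of Lemma \ref{identity}, that every element of rank $>k$ is a singleton class, and Lemma \ref{noidealh} gives that any two $\rho$-related elements of rank $k$ are $\H$-related. The restriction of $\rho$ to the $\H$-class of a rank-$k$ idempotent is therefore a group congruence, hence cut out by a normal subgroup $N$ of $S_k$ (resp. of $\w$ when $k=n$). Finally, the translation argument from the proof of Theorem \ref{finial} i)---conjugating by the four maps $\alpha,\beta,\alpha',\beta'$ of (\ref{simplecase})---transports this same $N$ to every $\H$-class of rank $k$, which identifies $\rho$ with $\equiv_N$.

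I expect the decisive step, and the conceptual payoff of the theorem, to be the one with no genuine counterpart in $\oor$: ruling out any congruence analogous to $\equiv_1,\equiv_2$. This is exactly what the simplified inequality lemma delivers. Because the ideals of $\sr$ do not split by type, the instant two units are nontrivially related the whole of $\mathcal{I}_m$---not merely one half of the rank-$m$ elements---falls into $\rho(0)$, so $\rho$ restricted to $\w$ is an ordinary group congruence and $\rho=\equiv_N$ with $N\unlhd\w$. Thus the $n=4$ anomaly of $\oor$ cannot arise, and I regard verifying this single-chain collapse, rather than any individual computation, as where the argument earns its ``only one kind'' conclusion.
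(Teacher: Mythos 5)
Your proposal is correct and follows exactly the route the paper intends: the paper's own ``proof'' of Theorem \ref{symConResult} is the one-line remark that it is a simplified version of Theorem \ref{finial}, and your write-up supplies precisely that simplification --- the single ideal chain $\I_0\subseteq\cdots\subseteq\I_m\subseteq\I_n$ of $\sr$, the verbatim transplants of Lemmas \ref{congideal2}, \ref{noidealh}, \ref{inequality} (noting that the admissible-$m$-subset claim only uses $\mu(\bar i)=\overline{\mu(i)}$), and the observation that the absence of type splitting at rank $m$ is what both admits normal subgroups of $\w$ and rules out analogues of $\equiv_1,\equiv_2$. No gaps.
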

\begin{proof} The proof is a simplified version of Theorem \ref{finial}. We omit its details.
\end{proof}

\vspace{5mm} \noindent Jianqiang Feng

\vspace {-1mm} \noindent College of Mathematics and Information
Science, Hebei University, Baoding, 071002; vonjacky@126.com

\vspace{5mm} \noindent Zhenheng Li

\vspace {-1mm} \noindent Department of
Mathematical Sciences, University of South Carolina Aiken, Aiken SC
29803; Email: zhenhengl@usca.edu

\end{document}